\newcommand{%
    \fontsize{8pt}{10pt}\selectfont
    \def\svgwidth{1\columnwidth}
    \import{./}{.pdf_tex}
}[2][1]{%
    \fontsize{8pt}{10pt}\selectfont
    \def\svgwidth{#1\columnwidth}
    \import{./}{#2.pdf_tex}
}
\newtheorem{theorem}{Theorem}[section]
\newtheorem{corollary}[theorem]{Corollary}
\newtheorem{lemma}[theorem]{Lemma}
\newtheorem{proposition}[theorem]{Proposition}
\newtheorem*{xrem}{Remark}
\renewcommand{\Im}{\operatorname{Im}}
\def\ext{\mathrm{Ext}}
\def\teich{Teichm\"uller }
\title[The asymptoticity of extremal length in Teichm\"uller space] {The asymptoticity of extremal length in Teichm\"uller space}
\author[Zhiyang Lyu]{Zhiyang Lyu$^*$}
\address[Zhiyang Lyu] {School of Mathematical Sciences, University of Science and Technology of China, 96 Jinzhai Road, 230026, Hefei, Anhui, China}
\email{lyuzhiyang@ustc.edu.cn}
\thanks{$^*$Corresponding author: \href{mailto:lyuzhiyang@ustc.edu.cn}{lyuzhiyang@ustc.edu.cn}}
\author[Y. Qi]{Yi Qi}
\address [Yi Qi] {School of Mathematics and Systems Science, Beihang University, 100191, Beijing, China}
\email{yiqi@buaa.edu.cn}
\thanks{The second author was partially supported by the National Natural Science Foundation of China through Grant 12271017.}
\subjclass[2020]{30F60, 32G15, 57M50.}
\keywords{Teichm\"uller space, extremal length, Teichm\"uller ray.}
\begin{document}

\baselineskip=16pt
\parskip=2pt

\begin{abstract}
    We study the asymptotic behavior of extremal length along Teichm\"uller rays. Specifically, we determine the limit of extremal length along a Teichm\"uller ray and obtain an explicit expression for this limit, which complements a related formula established by Cormac Walsh. Building on this result and Kerckhoff's formula, we establish a formula for the limiting Teichm\"uller distance between two points moving along arbitrary pairs of Teichm\"uller rays. Furthermore, we derive a necessary and sufficient condition for two Teichm\"uller rays to be asymptotic. Finally, by shifting the initial points of the Teichm\"uller rays along their associated Teichm\"uller geodesics, we show that the minimum of the limiting Teichm\"uller distance coincides with the detour metric between the endpoints of the rays on the horofunction boundary.
\end{abstract}

\maketitle




\section{Introduction}\label{Introduction}

Extremal length is introduced by Ahlfors and Beurling \cite{Ahl2006} for families of curves in the plane. It is a conformal invariant and has the quasiconformal distortion property. Kerckhoff \cite{Ker1980} studied the extremal lengths of simple closed curves on Riemann surfaces, thereby proving that the Thurston compactification and \teich compactification are distinct. More significantly, he extended extremal length to the measured foliation space $\mathcal{MF}$ and showed a useful formula for the \teich distance, known as Kerckhoff's formula. Gardiner and Masur \cite{GM1991} further developed the extremal length geometry of \teich space. They showed that the infimum of the product of the extremal lengths for two transversely measured foliations is realized by a unique \teich geodesic. Furthermore, they gave a compactification of \teich space by the extremal length in a manner analogous to Thurston compactification. Walsh \cite{Wal2019} studied the asymptotic behavior of extremal length and established a formula for the limiting extremal length along a \teich ray. In addition, he proved that the Gardiner-Masur compactification coincides with the Thurston compactification. This result was also established independently by Liu and Su \cite{LS2014}.

Let $S$ be an oriented topological surface of genus $g$ with $m$ punctures such that $3g-3+m\geq 1$. The \teich space $\mathcal{T}(S)$ of $S$ is the space of marked Riemann surfaces up to \teich equivalence. It is endowed with a natural complete metric $d_{\mathcal{T}}(\cdot,\cdot)$, called the \teich metric. The \teich ray $\mathcal{R}_{q,X}(t)$ induced by a unit-norm quadratic differential $q$ on $X$ is a geodesic ray in $\mathcal{T}(S)$ with respect to the \teich metric. 

Let $V(q)$ and $H(q)$ denote the vertical and horizontal foliations of $q$ respectively. It is known that, by removing the critical graph $\Gamma_q$ of $V(q)$, the surface $X$ can be decomposed into finitely many cylinders swept out by closed leaves and minimal domains in which all leaves are dense. An indecomposable component of $V(q)$ is either a cylindrical component or a minimal component with a transverse ergodic measure. It can be shown that the number of projectively distinct ergodic measures on a minimal domain is finite.

Miyachi \cite{Miy2013} showed that a \teich ray admits a limit in the Gardiner-Masur compactification, and furthermore, that different \teich rays from $X$ have different limits. In particular, Miyachi \cite{Miy2008} provided an explicit formula for the limit in the case of Jenkins-Strebel rays, which is also proved by Kerckhoff \cite{Ker1980}. In \cite{Wal2019}, Walsh extended this formula to arbitrary \teich rays. For any measured foliation $F\in\mathcal{MF}$, Walsh showed that 
$$
\lim_{t\to\infty}e^{-2t}\ext_{\mathcal{R}_{q,X}(t)}(F)=\sum_{j=1}^n\frac{a_ji(G_j,F)^2}{i(G_j,H(q))},
$$
where $G_j$ is an indecomposable component of $V(q)$ and $a_j$ is a positive coefficient. Furthermore, Walsh proved that the limit of a \teich ray in the Gardiner-Masur boundary corresponds to a Busemann point in the horofunction boundary and vice versa. It was observed that non-Busemann points exist in the horofunction boundary, a result that was independently confirmed by Miyachi \cite{Miy2014}.

It is evident that the limit is degenerated if the measured foliation $F$ has no geometric intersection with $V(q)$. In order to ascertain the effect of the measured foliations that do not intersect with $V(q)$, it is necessary to consider the asymptotic behavior of $e^{2t}\ext_{\mathcal{R}_{q,X}(t)}(\cdot)$ for such measured foliations. The explicit limit expression for these measured foliations are derived as follows.

\begin{theorem}\label{limit of e^2text}
    Let $\mathcal{R}_{q,X}(t)$ be the Teichm\"uller ray from $X\in\mathcal{T}(S)$ determined by the unit-norm quadratic differential $q$. Suppose that the vertical foliation of $q$ can be expressed as $V(q)=\sum_{j=1}^na_jG_j$, where $a_j$ is a positive coefficient and $G_j$ is an indecomposable component of $V(q)$. Then for any $F\in\mathcal{MF}$, 
    $$
    \lim_{t\to\infty}e^{2t}\ext_{X_t}(F)=\sup_{F'\in\mathcal{MF}}\frac{i(F,F')^2}{\sum_{j=1}^n\frac{a_ji(G_j,F')^2}{i(G_j,H(q))}},
    $$
    where $X_t$ is a point on the \teich ray $\mathcal{R}_{q,X}(t)$.
\end{theorem}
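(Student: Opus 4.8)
The plan is to reduce the statement to Walsh's pointwise limit by means of the extremal-length duality
$$
\ext_X(F)=\sup_{F'\in\mathcal{MF}}\frac{i(F,F')^2}{\ext_X(F')},
$$
valid at every $X\in\mathcal{T}(S)$. This identity follows from Minsky's inequality $i(F,F')^2\le \ext_X(F)\ext_X(F')$ together with the Hubbard--Masur theorem: if $\tilde q$ is the quadratic differential on $X$ with vertical foliation $F$, then $F'=H(\tilde q)$ realizes equality, since $\ext_X(V(\tilde q))=\ext_X(H(\tilde q))=i(V(\tilde q),H(\tilde q))=\|\tilde q\|$. Applying this at $X_t=\mathcal{R}_{q,X}(t)$ and multiplying by $e^{2t}$ gives
$$
e^{2t}\ext_{X_t}(F)=\sup_{F'\in\mathcal{MF}}\frac{i(F,F')^2}{\phi_t(F')},\qquad \phi_t(F'):=e^{-2t}\ext_{X_t}(F').
$$
Since numerator and denominator are both homogeneous of degree two in $F'$, the supremum may be taken over the compact slice $\Sigma=\{F':\ext_X(F')=1\}\cong\mathcal{PMF}$. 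By Walsh's formula $\phi_t\to L$ pointwise on $\Sigma$, where $L(F')=\sum_j a_j i(G_j,F')^2/i(G_j,H(q))$, so the whole problem is to justify the interchange of the supremum and the limit.

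The lower bound $\liminf_t\sup_\Sigma i(F,\cdot)^2/\phi_t\ge\sup_\Sigma i(F,\cdot)^2/L$ is immediate: for each fixed $F'$ with $L(F')>0$ the convergence $\phi_t(F')\to L(F')>0$ yields $i(F,F')^2/\phi_t(F')\to i(F,F')^2/L(F')$, and one then takes the supremum over such $F'$. If instead $i(F,F'')>0$ for some $F''$ in the degenerate set $Z=\{L=0\}$, then $g_t(F'')\to\infty$ and $\sup_\Sigma i(F,\cdot)^2/L=+\infty$, so both sides are infinite and there is nothing to prove.

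For the upper bound I would split $\Sigma$ at a threshold $\delta>0$. On $\{L\ge\delta\}$, which is compact with $L$ bounded below, I would upgrade the pointwise convergence to uniform convergence $\phi_t\to L$, extracting this from the uniform convergence $e^{-t}\ext_{X_t}^{1/2}\to\sqrt{L}$ on $\Sigma$ supplied by the Gardiner--Masur compactification (the ray converges to the boundary point with representative $\sqrt L$, and this convergence is uniform on $\mathcal{PMF}$ by equicontinuity of the normalized extremal-length functions). Hence $\sup_{\{L\ge\delta\}}i(F,\cdot)^2/\phi_t\to\sup_{\{L\ge\delta\}}i(F,\cdot)^2/L\le\sup_\Sigma i(F,\cdot)^2/L$. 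The delicate region is the neighbourhood $\{L<\delta\}$ of $Z$, and controlling the supremum there is the main obstacle. The basic tool is the uniform-in-$t$ lower bound from Minsky's inequality with $G=V(q)$: since $\ext_{X_t}(V(q))=e^{-2t}$, one has $\phi_t(F')\ge i(F',V(q))^2$ for all $t$ and all $F'$. When $V(q)$ is indecomposable ($n=1$) this is sharp, $i(F',V(q))^2=L(F')$, so $\phi_t\ge L$ globally and the upper bound is immediate.

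The genuine difficulty is the multi-component case $n\ge 2$, where $i(F',V(q))^2=\big(\sum_j a_j i(G_j,F')\big)^2$ is strictly smaller than $L(F')$ near $Z$ and the crude bound loses a constant. Here I would observe that a near-maximizer $F'_t\to F'_*\in Z$ can cause trouble only when $i(F,F'_*)=0$ as well (otherwise both sides are already infinite), so that numerator and denominator of $i(F,F'_t)^2/\phi_t(F'_t)$ both tend to $0$ and the issue is purely one of comparable rates. To pin down the rate I would use the decomposition of $X$ into the cylinders and minimal domains carrying the components $G_j$ and the fact that, under the Teichm\"uller deformation, these pieces asymptotically decouple, so that $\phi_t(F')\ge(1-o(1))L(F')$ uniformly on $\Sigma$ as $t\to\infty$. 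Establishing this uniform relative lower bound near $Z$---equivalently, that the components contribute additively and none is lost in the limit---is the technical heart of the argument; granting it, $i(F,\cdot)^2/\phi_t\le(1-o(1))^{-1}\,i(F,\cdot)^2/L$ gives $\limsup_t\sup_\Sigma i(F,\cdot)^2/\phi_t\le\sup_\Sigma i(F,\cdot)^2/L$, which together with the lower bound completes the proof.
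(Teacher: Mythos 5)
Your overall architecture coincides with the paper's: the lower bound via Minsky's inequality (Lemma \ref{inequality}) combined with Walsh's limit (Theorem \ref{Walsh's equality}), and the upper bound via the extremal-length duality $\ext_{X_t}(F)=\sup_{F'}i(F,F')^2/\ext_{X_t}(F')$, realized at each time $t$ by the horizontal foliation $H_t$ of the Hubbard--Masur differential for $F$ on $X_t$. The lower-bound half of your argument, including the treatment of the degenerate set $Z$, is complete and correct.

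The upper bound, however, has a genuine gap exactly where you flag it: everything rests on the uniform relative lower bound $e^{-2t}\ext_{X_t}(F')\ge(1-o(1))\,L(F')$, which you do not prove --- ``the components asymptotically decouple'' is a restatement of the claim, not an argument --- and your only proved estimate, $e^{-2t}\ext_{X_t}(F')\ge i(F',V(q))^2$, is, as you note, lossy for $n\ge 2$. The missing ingredient is precisely Walsh's inequality (Lemma \ref{Walsh's inequality} in this paper, quoted from \cite{Wal2019}): $e^{-2t}\ext_{X_t}(F')\ge\sum_{j=1}^n a_j i(G_j,F')^2/i(G_j,H(q))=L(F')$ for \emph{all} $t\ge 0$ and all $F'\in\mathcal{MF}$, sharp with no multiplicative loss and valid for every $n$. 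Since you already invoke Walsh's limit theorem, invoking this companion inequality costs nothing, and it collapses your entire upper-bound apparatus: plugging $F'=H_t$ into it gives $e^{2t}\ext_{X_t}(F)=i(F,H_t)^2/\bigl(e^{-2t}\ext_{X_t}(H_t)\bigr)\le i(F,H_t)^2/L(H_t)\le\sup_{F'}i(F,F')^2/L(F')$ for every $t$, so the $\delta$-threshold splitting, the compact slice $\Sigma$, and the uniform-convergence upgrade on $\{L\ge\delta\}$ (which you likewise only assert, via an unproved equicontinuity claim for the Gardiner--Masur convergence) are all unnecessary. Your $n=1$ observation is correct --- it amounts to $a_1 i(G_1,H(q))=\|q\|=1$ --- but it does not extend, and without Walsh's inequality or an actual proof of your decoupling estimate, the multi-component case, which is the substance of the theorem, remains open in your write-up.
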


The limit is infinite if the measured foliation $F$ intersects $V(q)$, since in this case the measured foliation $F'$ can be chosen to coincide with $V(q)$. Moreover, if the measured foliation $F$ is similar to $V(q)$, that is, if $F$ can be expressed as $F=\sum_{j=1}^nc_jG_j$, where $c_j> 0$ is a coefficient, the right side of the formula can be simplified.

\begin{corollary}\label{limit for similar foliation}
    Let $\mathcal{R}_{q,X}(t)$ be the Teichm\"uller ray determined by the unit-norm quadratic differential $q$, and let $V(q)=\sum_{j=1}^na_jG_j$ be the vertical foliation. If a measured foliation $F\in\mathcal{MF}$ can be written in the form $F=\sum_{j=1}^nc_jG_j$, where $c_j> 0$ is a coefficient, then 
    $$
    \lim_{t\to\infty}e^{2t}\ext_{X_t}(F)=\sum_{j=1}^n\frac{c_j^2i(G_j,H(q))}{a_j}.
    $$
\end{corollary}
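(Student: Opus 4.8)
The plan is to evaluate the supremum appearing in Theorem~\ref{limit of e^2text} directly for the special foliation $F=\sum_{j=1}^n c_j G_j$, reducing the problem to a finite-dimensional optimization solved by the Cauchy--Schwarz inequality. Because the indecomposable components $G_j$ of $V(q)$ have mutually disjoint supports, the intersection number is additive with respect to this decomposition in the first slot, so for every $F'\in\mathcal{MF}$ one has $i(F,F')=\sum_{j=1}^n c_j\, i(G_j,F')$. Writing $x_j=i(G_j,F')\ge 0$ and $b_j=a_j/i(G_j,H(q))>0$, the quantity to be maximized becomes
$$
\frac{i(F,F')^2}{\sum_{j=1}^n\frac{a_j\, i(G_j,F')^2}{i(G_j,H(q))}}=\frac{\left(\sum_{j=1}^n c_j x_j\right)^2}{\sum_{j=1}^n b_j x_j^2}.
$$

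First I would establish the upper bound. Applying the Cauchy--Schwarz inequality to the vectors $\bigl(c_j/\sqrt{b_j}\bigr)_j$ and $\bigl(\sqrt{b_j}\,x_j\bigr)_j$ yields
$$
\left(\sum_{j=1}^n c_j x_j\right)^2\le\left(\sum_{j=1}^n\frac{c_j^2}{b_j}\right)\left(\sum_{j=1}^n b_j x_j^2\right),
$$
so the ratio is bounded above by $\sum_{j=1}^n c_j^2/b_j=\sum_{j=1}^n c_j^2\, i(G_j,H(q))/a_j$ for every admissible $F'$. Hence the supremum does not exceed this value.

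The second and crucial step is to show that the bound is attained, i.e. to exhibit an $F'$ realizing the equality case $x_j=\lambda c_j/b_j$ of Cauchy--Schwarz. Here I would use the decomposition of the horizontal foliation along the critical graph $\Gamma_q$: cutting $X$ along $\Gamma_q$ splits it into the components $Y_j$ supporting the $G_j$, and $H(q)$ restricts to a horizontal piece $H_j$ on each $Y_j$. Since the $Y_j$ are disjoint, $i(G_j,H_k)=0$ for $k\ne j$, while $i(G_j,H_j)=i(G_j,H(q))$. Taking $F'=\sum_{j=1}^n \frac{c_j}{a_j} H_j\in\mathcal{MF}$ therefore gives $i(G_j,F')=\frac{c_j}{a_j}\,i(G_j,H(q))$, which is exactly the proportionality $x_j=\lambda c_j/b_j$ with $\lambda=1$; substituting back confirms equality. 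Combining the two steps evaluates the supremum as $\sum_{j=1}^n c_j^2\, i(G_j,H(q))/a_j$, and Theorem~\ref{limit of e^2text} then delivers the claimed limit.

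I expect the main obstacle to be the attainment step rather than the inequality: one must verify that the candidate $F'$ assembled from the horizontal pieces is a genuine element of $\mathcal{MF}$ and that the orthogonality relations $i(G_j,H_k)=\delta_{jk}\,i(G_j,H(q))$ hold, both of which rest on the disjointness of the supports of the indecomposable components and the additivity of the intersection number over this decomposition. Finally, the degenerate cases are harmless: if every $c_j=0$ then $F=0$ and both sides vanish, and since $G_j$ and $H_j$ jointly fill $Y_j$ one always has $i(G_j,H(q))>0$, so no division by zero occurs.
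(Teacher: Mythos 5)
Your proposal follows the paper's proof almost verbatim in structure: evaluate the supremum of Theorem~\ref{limit of e^2text} at $F=\sum_j c_jG_j$, expand $i(F,F')=\sum_j c_j\,i(G_j,F')$, apply Cauchy--Schwarz to get the upper bound $\sum_j c_j^2\,i(G_j,H(q))/a_j$, and exhibit equality via an $F'$ built from $H(q)$ with its transverse measure rescaled by $c_j/a_j$ against each component. The paper's own equality witness is described in exactly these terms (``adjusting the transverse measures of $H(q)$ in the intersection set with $G_j$, scaled by the factor $c_j/a_j$''), so the two arguments coincide up to the level of detail.

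There is, however, one genuine gap in your justification: the premise that the indecomposable components $G_j$ have mutually disjoint supports is false in general. As recalled in Section~\ref{Introduction} and \S\ref{measured foliations}, a single minimal domain $\Omega$ of $V(q)$ may carry several projectively distinct ergodic transverse measures, in which case several $G_j$ share $\Omega$ as their underlying domain. This has two consequences for your write-up. First, the additivity $i(F,F')=\sum_j c_j\,i(G_j,F')$ does not follow from disjointness of supports; it follows instead from linearity of the intersection number in the transverse measure for a fixed underlying foliation, so this part is easily repaired. Second, and more seriously, your attainment construction breaks in this case: cutting along $\Gamma_q$ does not separate components sharing $\Omega$, the horizontal pieces satisfy $H_j=H_k$ for such components, the orthogonality $i(G_j,H_k)=\delta_{jk}\,i(G_j,H(q))$ fails, and $F'=\sum_j \frac{c_j}{a_j}H_j$ cannot impose distinct scaling factors $c_j/a_j\neq c_k/a_k$ on the same domain, since rescaling the single piece $H_\Omega$ by a constant multiplies $i(G_j,H_\Omega)$ and $i(G_k,H_\Omega)$ by the same factor. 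The repair implicit in the paper's phrasing is measure-theoretic rather than topological: distinct ergodic measures on $\Omega$ are mutually singular, so the adjustment of the transverse measure of $H(q)$ is carried out on the mutually disjoint sets associated with the individual ergodic components $G_j$, not on disjoint subsurfaces $Y_j$. Your argument is complete as written precisely when every minimal component of $V(q)$ is uniquely ergodic; in the general case the construction of the optimizing $F'$ (or of a sequence $F'_k$ achieving the supremum, which is all that Theorem~\ref{limit of e^2text} requires) needs this finer, per-measure scaling.
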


The long-term behavior of \teich rays is a fundamental question in \teich geometric. In this paper, we focus on the asymptotic behavior of two \teich rays in \teich space. The divergence case for two \teich rays has been clarified by the efforts of Ivanov, Lenzhen and Masur. Ivanov \cite{Iva2001} showed that two \teich rays are divergent if their associated vertical foliations intersect. Furthermore, the \teich rays remain bounded if the vertical foliations are absolutely continuous (see \S\ref{measured foliations}). The remaining cases were resolved by Lenzhen and Masur \cite{LM2010}. They proved that two \teich rays are divergent if the vertical foliations are either topologically inequivalent or topologically equivalent but not absolutely continuous. Consequently, it can be concluded that two \teich rays are bounded if and only if the vertical foliations are absolutely continuous. 

For bounded \teich rays, the remaining questions concern the limiting \teich distances of pairs of \teich rays and the conditions under which they are asymptotic. In \cite{Mas1980}, Masur considered the case of uniquely ergodic foliations, proving that two bounded \teich rays with uniquely ergodic vertical foliations are asymptotic if their foliations contain no simple closed curves formed by saddle connections. Amano \cite{Ama2014-TheAsym} derived a formula for the limiting \teich distance of Jenkins-Strebel rays, showing that the limiting distance of bounded Jenkins-Strebel rays depends on both the moduli of cylindrical components of initial surfaces and the \teich distance between their endpoints in the augmented \teich spaces. In \cite{HLMQ}, we extended this formula to \teich rays along which the surfaces can be decomposed into cylinders and minimal domains whose transverse measures are uniquely ergodic, with the decomposition determined by the leaves of the vertical foliations. 

In \cite{HLMQ}, we adopted an approach based on constructing quasiconformal mappings between surfaces along two bounded \teich rays, combining techniques due to Amano and Masur. However, this method does not readily extend to the case of \teich rays whose associated vertical foliations contain non-uniquely ergodic minimal components. In this paper, we instead study the limiting \teich distances via Kerckhoff's formula. This approach allows us to treat the remaining case of non-uniquely ergodic foliations and avoids the technically demanding construction of quasiconformal mappings.

Since the vertical foliations $V(q)$ and $V(q')$ are absolutely continuous for the two bounded \teich rays $\mathcal{R}_{q,X}(t)$ and $\mathcal{R}_{q',Y}(t)$, each measured foliation $F\in\mathcal{MF}$ either intersects both $V(q)$ and $V(q')$ or intersects neither. Let $\mathcal{MF}_1(V(q))$ denote the collection of measured foliations that intersect $V(q)$, and let $\mathcal{MF}_0(V(q))$ denote the collection of measured foliations that have zero intersection with $V(q)$. We investigate the ratios of extremal lengths of measured foliations along the two bounded \teich rays and obtain two equalities for the limits of the supremum of these ratios, taken respectively over $\mathcal{MF}_1(V(q))$ and $\mathcal{MF}_0(V(q))$ (see Theorem \ref{equality of Ext in MF_1} and Theorem \ref{equality of Ext in MF_0}).
$$
\lim_{t\to\infty}\sup_{F\in\mathcal{MF}_1(V(q))}\frac{\mathrm{Ext}_{Y_t}(F)}{\mathrm{Ext}_{X_t}(F)}=\max_{1\leq j\leq n}\frac{b_ji(G_j,H(q))}{a_ji(G_j,H(q'))}.
$$
$$
\lim_{t\to\infty}\sup_{F\in\mathcal{MF}_0(V(q))}\frac{\mathrm{Ext}_{Y_t}(F)}{\mathrm{Ext}_{X_t}(F)}=\max_{1\leq j\leq n}\frac{a_ji(G_j,H(q'))}{b_ji(G_j,H(q))}.
$$
Combining these results with Kerckhoff's formula, we obtain the limiting distance for arbitrary pairs of \teich rays through the asymptotic behavior of extremal lengths, thereby establishing an explicit expression for the limit for any such pair.

\begin{theorem}\label{limiting distance}
    Let $\mathcal{R}_{q,X}(t)$ and $\mathcal{R}_{q',Y}(t)$ be two \teich rays induced by the unit-norm quadratic differentials $q$ and $q'$ respectively.
    \begin{itemize}
        \item [(i)] If the vertical foliations $V(q)$ and $V(q')$ are absolutely continuous, i.e. $V(q)=\sum_{j=1}^na_jG_j$ and $V(q')=\sum_{j=1}^nb_jG_j$, where $a_j, b_j>0$ are coefficients, then
        $$
        \lim_{t\to\infty}d_{\mathcal{T}}(X_t,Y_t)=\frac{1}{2}\log\max_{1\leq j\leq n}\left\{\frac{a_ji(G_j,H(q'))}{b_ji(G_j,H(q))},\frac{b_ji(G_j,H(q))}{a_ji(G_j,H(q'))}\right\}.
        $$
        \item [(ii)] Otherwise, 
        $$\lim_{t\to\infty}d_{\mathcal{T}}(X_t,Y_t)=+\infty.$$
    \end{itemize} 
\end{theorem}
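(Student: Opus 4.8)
The plan is to evaluate the limit through Kerckhoff's formula \cite{Ker1980},
$$
d_{\mathcal{T}}(X_t,Y_t)=\frac{1}{2}\log\sup_{F\in\mathcal{MF}}\frac{\ext_{X_t}(F)}{\ext_{Y_t}(F)},
$$
so that everything reduces to the supremum of $\Phi_t(F):=\ext_{X_t}(F)/\ext_{Y_t}(F)$. This ratio is homogeneous of degree zero in $F$, hence descends to the compact space $\mathcal{PMF}$, and for a fixed $F$ its limit is governed by whether $F$ meets $V(q)$ (equivalently $V(q')$, since in case (i) both foliations are built from the same components $G_j$). If $i(G_j,F)>0$ for some $j$, then by Walsh's formula \cite{Wal2019} both extremal lengths grow like $e^{2t}$ and $\Phi_t(F)\to \frac{\sum_j a_ji(G_j,F)^2/i(G_j,H(q))}{\sum_j b_ji(G_j,F)^2/i(G_j,H(q'))}$; if $i(G_j,F)=0$ for all $j$, then by Theorem \ref{limit of e^2text} both decay like $e^{-2t}$ and $\Phi_t(F)\to L_q(F)/L_{q'}(F)$, where $L_q(F)$ and $L_{q'}(F)$ are the two suprema in Theorem \ref{limit of e^2text}.

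Set $r_j=\dfrac{b_ji(G_j,H(q))}{a_ji(G_j,H(q'))}$ and $M=\max_{1\le j\le n}\max\{r_j,\,1/r_j\}$, so that (i) asserts $\lim_t e^{2d_{\mathcal{T}}(X_t,Y_t)}=M$. A direct computation shows the pointwise limit of $\Phi_t$ never exceeds $M$. Write $A(F')=\sum_j\frac{a_j}{i(G_j,H(q))}i(G_j,F')^2$ and $B(F')=\sum_j\frac{b_j}{i(G_j,H(q'))}i(G_j,F')^2$, and note that $B(F')=\sum_j r_j\frac{a_j}{i(G_j,H(q))}i(G_j,F')^2$, so that $(\min_j r_j)A(F')\le B(F')\le(\max_j r_j)A(F')$ for every $F'$. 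In the first regime the limit equals $A(F)/B(F)\le\max_j 1/r_j$; in the second, the inequality $B\le(\max_j r_j)A$ gives $L_{q'}(F)\ge(\max_j r_j)^{-1}L_q(F)$, hence $L_q(F)/L_{q'}(F)\le\max_j r_j$. Either way the pointwise limit is $\le M$. For the matching lower bound I substitute similar foliations $F=\sum_j c_jG_j$ ($c_j\ge0$) into Corollary \ref{limit for similar foliation}, obtaining $\lim_t\Phi_t(F)=\frac{\sum_j c_j^2 i(G_j,H(q))/a_j}{\sum_j c_j^2 i(G_j,H(q'))/b_j}$, whose supremum over $c_j\ge0$ is $\max_j r_j$; applying the identical computation to $d_{\mathcal{T}}(Y_t,X_t)$ and using the symmetry $d_{\mathcal{T}}(X_t,Y_t)=d_{\mathcal{T}}(Y_t,X_t)$ yields $\max_j 1/r_j$ as well, whence $\liminf_t e^{2d_{\mathcal{T}}(X_t,Y_t)}\ge M$.

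The crux, and the step I expect to be the main obstacle, is the reverse inequality, which amounts to interchanging $\sup_F$ with $\lim_t$. Pointwise control is insufficient, because the limiting ratio is discontinuous across the interface $\{F:i(G_j,F)=0\ \forall j\}$ and a near-maximizing family $F_t$ could accumulate there. The route I would take is to upgrade Walsh's formula and Theorem \ref{limit of e^2text} to a uniform comparison: for every $\varepsilon>0$ there is $T$ such that $\ext_{X_t}(F)\le(M+\varepsilon)\ext_{Y_t}(F)$ for all $F\in\mathcal{MF}$ and all $t\ge T$. This would be established by estimating $\ext_{X_t}(F)$ componentwise over the cylinders and minimal domains cut out by the critical graph $\Gamma_q$, comparing each contribution on $X_t$ with its counterpart on $Y_t$ through the shared component $G_j$, and absorbing the contribution of $\Gamma_q$ into the error as $t\to\infty$. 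This is precisely the step generalizing the constructions of Amano \cite{Ama2014-TheAsym} and of \cite{HLMQ} to minimal components that need not be uniquely ergodic. Combined with the lower bound and the compactness of $\mathcal{PMF}$, the uniform comparison gives $\lim_t e^{2d_{\mathcal{T}}(X_t,Y_t)}=M$, proving (i).

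For (ii), suppose $V(q)$ and $V(q')$ are not absolutely continuous. If $i(V(q),V(q'))>0$, I would argue directly: taking $F=V(q')$, Walsh's formula gives $\ext_{X_t}(V(q'))\asymp e^{2t}$ (as $V(q')$ meets $V(q)$), while Corollary \ref{limit for similar foliation} gives $\ext_{Y_t}(V(q'))\asymp e^{-2t}$, so Kerckhoff's formula forces $d_{\mathcal{T}}(X_t,Y_t)\ge\frac12\log\Phi_t(V(q'))\to+\infty$; the symmetric choice $F=V(q)$ covers in particular the case of two distinct filling foliations. In the remaining case $i(V(q),V(q'))=0$ with the foliations topologically inequivalent, or topologically equivalent but not absolutely continuous, the rays are divergent by Ivanov \cite{Iva2001} and Lenzhen--Masur \cite{LM2010}; since their estimates bound $d_{\mathcal{T}}(X_t,Y_t)$ below by a quantity tending to infinity, this yields $\lim_t d_{\mathcal{T}}(X_t,Y_t)=+\infty$ and completes (ii).
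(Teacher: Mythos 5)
Your skeleton --- Kerckhoff's formula, the dichotomy between foliations meeting $V(q)$ and those in $\mathcal{MF}_0(V(q))$, the pointwise limits in the two regimes, the lower bound via similar foliations $\sum_j c_jG_j$ together with the symmetry of $d_{\mathcal{T}}$, and part (ii) via Ivanov \cite{Iva2001} and Lenzhen--Masur \cite{LM2010} --- all agree with the paper. But at the step you yourself identify as the crux, the upper bound (interchanging $\sup_F$ with $\lim_t$), your proposal stops being a proof: you \emph{posit} a uniform comparison $\ext_{X_t}(F)\le(M+\varepsilon)\ext_{Y_t}(F)$ for all $F$ and large $t$, to be obtained by componentwise flat-geometry estimates over the decomposition along $\Gamma_q$, ``generalizing Amano \cite{Ama2014-TheAsym} and \cite{HLMQ} to minimal components that need not be uniquely ergodic.'' That generalization is precisely the obstruction that confined those earlier results to uniquely ergodic pieces (on a non-uniquely ergodic minimal domain a near-extremal metric for $F$ does not localize component by component, and there is no leafwise comparison of inequivalent transverse measures), so deferring to it leaves the heart of Theorem \ref{limiting distance} unproved. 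A second, smaller gap: in the $\mathcal{MF}_0(V(q))$ regime you divide the two limits from Theorem \ref{limit of e^2text} without checking that $L_q(F)$ and $L_{q'}(F)$ are finite and nonzero; this does hold (any two foliations disjoint from $V(q)$ have zero mutual intersection, while every nonzero $F$ meets $H(q)$), but it needs saying.

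The paper's proof avoids all flat-geometry surgery with a soft duality argument that is the missing idea here. On $\mathcal{MF}_1(V(q))$ it combines the \emph{time-uniform} one-sided bound of Lemma \ref{Walsh's inequality} (valid for every $t$, not only in the limit), Walsh's identity $\sup_{F\in\mathcal{MF}_1(V(q))}\mathcal{E}_{q'}(F)^2/\mathcal{E}_q(F)^2=\max_j b_ji(G_j,H(q))/\bigl(a_ji(G_j,H(q'))\bigr)$ (Lemma \ref{maximum ratio of E_q}), and a subsequence/contradiction argument to prove Theorem \ref{equality of Ext in MF_1}. For the regime $\mathcal{MF}_0(V(q))$ --- exactly where you wanted uniform control of $e^{2t}\ext$ --- the paper sidesteps Theorem \ref{limit of e^2text} entirely in the upper bound: by Hubbard--Masur \cite{HM1979} and the equality case of Lemma \ref{inequality}, for each $F$ and $t$ there is $H'_t$ with $\ext_{Y_t}(F)\ext_{Y_t}(H'_t)=i(F,H'_t)^2$, while $\ext_{X_t}(F)\ext_{X_t}(H'_t)\ge i(F,H'_t)^2$, whence
$$
\frac{\ext_{Y_t}(F)}{\ext_{X_t}(F)}\le\frac{\ext_{X_t}(H'_t)}{\ext_{Y_t}(H'_t)}\le\sup_{G\in\mathcal{MF}_1(V(q))}\frac{\ext_{X_t}(G)}{\ext_{Y_t}(G)},
$$
(note $H'_t\in\mathcal{MF}_1(V(q))$, since $F$ and $H'_t$ bind the surface and $i(V(q),F)=0$). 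This reduces the hard regime to the already-settled $\mathcal{MF}_1$ statement with the roles of the two rays exchanged (Theorem \ref{equality of Ext in MF_0}), and the max--max formula then drops out of Kerckhoff's formula. So while your outline is faithful in structure, the one inequality that makes the theorem true is replaced in your write-up by a program that is not carried out and is, as stated, the hardest route to it.
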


\begin{xrem}
    In \cite{HLMQ}, We defined the limiting surfaces for \teich rays and the \teich distance between these limiting surfaces. A comparison between the main result of \cite{HLMQ} and Theorem \ref{limiting distance} shows that, when considering the limit distance for two bounded \teich rays, it is unnecessary to analyze the distance between their limiting surfaces. Indeed, since the intersection number of a simple closed curve with $V(q)$ is preserved along the \teich ray $\mathcal{R}_{q,X}(t)$, each simple closed curve in $\mathcal{MF}_0(V(q))$ corresponds naturally to a simple closed curve on the limiting surface of $\mathcal{R}_{q,X}(t)$, which may be homotopic to a puncture. Then, by combining Kerckhoff's formula with Theorem \ref{equality of Ext in MF_0}, Theorem \ref{limiting distance} implies that the distance between limiting surfaces of two bounded \teich rays $\mathcal{R}_{q,X}(t)$ and $\mathcal{R}_{q',Y}(t)$ is bounded by $\frac{1}{2}\log\max_{1\leq j\leq n}\left\{\frac{a_ji(G_j,H(q'))}{b_ji(G_j,H(q))},\frac{b_ji(G_j,H(q))}{a_ji(G_j,H(q'))}\right\}$.
\end{xrem}

The notion of modulus for a cylinder extends naturally to any indecomposable component of $V(q)=\sum_{j=1}^na_jG_j$. We define the modulus of the indecomposable component $G_j$ of $V(q)$ by
$$
m_j=\frac{a_j}{i(G_j,H(q))}.
$$
Given two absolutely continuous vertical foliations $V(q)=\sum_{j=1}^na_jG_j$ and $V(q')=\sum_{j=1}^nb_jG_j$, we say that $q$ and $q'$ (or $V(q)$ and $V(q')$) are modularly equivalent if 
$$
\frac{a_j}{i(G_j,H(q))}=C\frac{b_j}{i(G_j,H(q'))},
$$
for all $j$, where $C$ is a positive constant.

\begin{corollary}\label{asymptotic}
    The \teich rays $\mathcal{R}_{q,X}(t)$ and $\mathcal{R}_{q',Y}(t)$ are asymptotic if and only if $q$ and $q'$ are modularly equivalent.
\end{corollary}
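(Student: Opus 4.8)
The plan is to deduce the corollary from Theorem~\ref{limiting distance} together with the behaviour of the moduli $m_j=a_j/i(G_j,H(q))$ under a shift of the base point along the ray. I first record that, since modular equivalence presupposes that $V(q)$ and $V(q')$ share the same indecomposable components $G_j$ (that is, are absolutely continuous), the case where the two foliations are \emph{not} absolutely continuous makes both sides of the asserted equivalence fail: by Theorem~\ref{limiting distance}(ii) the limiting distance is then $+\infty$, and it remains $+\infty$ after any shift of the base points (a shift only rescales the vertical foliations and so preserves absolute continuity), whence the rays are not asymptotic; and they are not modularly equivalent by definition. It therefore suffices to treat the absolutely continuous case $V(q)=\sum_{j=1}^n a_jG_j$, $V(q')=\sum_{j=1}^n b_jG_j$.

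Next I would compute the effect of shifting the base point of the second ray. Writing $Y_s=\mathcal{R}_{q',Y}(s)$ and letting $q'_s$ be the unit-norm quadratic differential defining the ray onward from $Y_s$, the \teich flow rescales the foliations as $V(q'_s)=e^sV(q')=\sum_{j=1}^n(e^sb_j)G_j$ and $H(q'_s)=e^{-s}H(q')$, because the vertical (resp.\ horizontal) extremal length contracts (resp.\ expands) by the factor $e^{-2s}$ (resp.\ $e^{2s}$) along the ray. Since $Y_{t+s}=\mathcal{R}_{q'_s,Y_s}(t)$, applying Theorem~\ref{limiting distance}(i) to the pair $(\mathcal{R}_{q,X},\mathcal{R}_{q'_s,Y_s})$ gives
$$\lim_{t\to\infty}d_{\mathcal{T}}(X_t,Y_{t+s})=\frac12\log\max_{1\le j\le n}\left\{e^{-2s}r_j,\;e^{2s}r_j^{-1}\right\},$$
where $r_j:=\dfrac{a_j\,i(G_j,H(q'))}{b_j\,i(G_j,H(q))}=\dfrac{m_j}{m_j'}$ with $m_j'=b_j/i(G_j,H(q'))$. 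Here I use the definition that the two rays are asymptotic precisely when this limit can be made equal to $0$ for a suitable shift $s$ (equivalently, that after reparametrisation their distance tends to $0$).

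It then remains to decide when the right-hand side can be driven to $0$. For the backward implication, if $q$ and $q'$ are modularly equivalent then every $r_j$ equals a common constant $C>0$; choosing $s=\tfrac12\log C$ yields $e^{-2s}r_j=e^{2s}r_j^{-1}=1$ for all $j$, so the limiting distance is $\tfrac12\log 1=0$ and the rays are asymptotic. Conversely, if the rays are asymptotic, then for some $s$ the displayed maximum equals $1$; since each of the two quantities $e^{-2s}r_j$ and $e^{2s}r_j^{-1}$ is bounded by that maximum, we obtain $e^{-2s}r_j\le 1$ and $e^{2s}r_j^{-1}\le 1$ simultaneously, which force $r_j=e^{2s}$ for every $j$. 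Hence all the ratios $r_j=m_j/m_j'$ coincide, which is exactly modular equivalence with $C=e^{2s}$.

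I expect the only genuinely delicate point to be the bookkeeping in the second step: verifying the rescaling $V(q'_s)=e^sV(q')$ and $H(q'_s)=e^{-s}H(q')$, and confirming that shifting the base point multiplies every modulus $m_j'$ by the same factor $e^{2s}$, so that the shift acts on the whole vector $(r_j)_j$ through a single common scalar. Granting this, the statement collapses to the elementary fact that a family of positive numbers can be simultaneously normalised to $1$ by one common rescaling if and only if the numbers are all equal; every other step is a direct invocation of Theorem~\ref{limiting distance}.
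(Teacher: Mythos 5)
Your proof is correct and follows essentially the same route as the paper: both directions reduce to Theorem~\ref{limiting distance} together with the fact that shifting the base point along the second ray rescales every modulus $m_j'=b_j/i(G_j,H(q'))$ by the common factor $e^{2s}$, so that asymptoticity forces all ratios $r_j$ to equal $e^{2s}$ while modular equivalence lets one normalise the maximum to $1$ with $s=\tfrac12\log C$. The only difference is presentational: you derive explicitly the rescaling $V(q'_s)=e^sV(q')$, $H(q'_s)=e^{-s}H(q')$ and treat the non--absolutely-continuous case separately, both of which the paper's proof uses implicitly (via its ``without loss of generality'' normalisation of the initial points and the $e^{2\sigma}b_j$ appearing in its displayed formula).
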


The limiting \teich distance between any pair of \teich rays only depends on the ratios of the moduli of the indecomposable components at the initial points. Therefore, By appropriately shifting these initial points along their corresponding geodesics, the minimum of the limiting \teich distance can be achieved, which is associated with the detour metric on the horofunction boundary of $\mathcal{T}(S)$.

\begin{proposition}\label{minimum of the limiting distance}
    Let $\mathcal{R}_{q,X}(t)$ and $\mathcal{R}_{q',Y}(t)$ be two \teich rays induced by the unit-norm quadratic differentials $q$ and $q'$ respectively. If the vertical foliations $V(q)$ and $V(q')$ are absolutely continuous, then the minimum of the limiting \teich distance is $\frac{1}{2}\delta(\mathcal{B}_q,\mathcal{B}_{q'})$, where $\delta$ is the detour metric and $\mathcal{B}_q,\mathcal{B}_{q'}$ are the Busemann points on the horofunction boundary of $\mathcal{T}(S)$ associated to $\mathcal{R}_{q,X}(t)$ and $\mathcal{R}_{q',Y}(t)$ respectively.
\end{proposition}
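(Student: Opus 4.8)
The plan is to express both the minimal limiting distance and the detour metric in terms of the component moduli $m_j=a_j/i(G_j,H(q))$ and $m_j'=b_j/i(G_j,H(q'))$, and to check that each equals $\frac{1}{4}\log(R/\rho)$, where $r_j:=m_j/m_j'$, $R:=\max_j r_j$ and $\rho:=\min_j r_j$. First I rewrite Theorem~\ref{limiting distance}(i): since $\frac{a_ji(G_j,H(q'))}{b_ji(G_j,H(q))}=r_j$, the limiting distance of the unshifted rays is $\frac{1}{2}\log\max_j\{r_j,r_j^{-1}\}$. To track the effect of shifting, I move the initial point of the first ray to $X_s$ and apply Corollary~\ref{limit for similar foliation} to the ray $\tau\mapsto X_{s+\tau}$ issuing from it: this gives $1/m_k^{(s)}=\lim_{\tau\to\infty}e^{2\tau}\ext_{X_{s+\tau}}(G_k)=e^{-2s}\lim_{t\to\infty}e^{2t}\ext_{X_t}(G_k)=e^{-2s}/m_k$, so that $m_k^{(s)}=e^{2s}m_k$ and \emph{every} modulus of $V(q)$ is rescaled by the same factor. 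Shifting the second ray by $u$ rescales every modulus of $V(q')$ by $e^{2u}$, so Theorem~\ref{limiting distance}(i) applied to the shifted rays yields the limiting distance $\frac{1}{2}\log\max_j\{\mu r_j,\mu^{-1}r_j^{-1}\}$, where $\mu:=e^{2(s-u)}$ is the only free parameter. Minimizing $\max\{\mu R,\mu^{-1}\rho^{-1}\}$ over $\mu>0$, the two terms balance at $\mu=(R\rho)^{-1/2}$ and the minimal value is $\sqrt{R/\rho}$; hence
$$\min_{s,u}\lim_{t\to\infty}d_{\mathcal{T}}(X_{s+t},Y_{u+t})=\frac{1}{4}\log\frac{R}{\rho}.$$

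It remains to prove $\delta(\mathcal{B}_q,\mathcal{B}_{q'})=\frac{1}{2}\log(R/\rho)$, which is the heart of the argument. Normalizing all horofunctions at a common basepoint and using that \teich rays determine Busemann points, the detour cost is $H(\mathcal{B}_q,\mathcal{B}_{q'})=\lim_{t\to\infty}\big(t+\mathcal{B}_{q'}(X_t)\big)$, and I would evaluate it through Kerckhoff's formula $d_{\mathcal{T}}(X_t,Y_s)=\frac{1}{2}\log\sup_{F}\ext_{X_t}(F)/\ext_{Y_s}(F)$. Writing the detour cost, up to a basepoint constant, as the iterated limit $\lim_{t\to\infty}\lim_{s\to\infty}\big(t-s+d_{\mathcal{T}}(X_t,Y_s)\big)$, Kerckhoff's formula turns the inner expression into $\frac{1}{2}\log\sup_F\frac{e^{2t}\ext_{X_t}(F)}{e^{2s}\ext_{Y_s}(F)}$. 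By Theorem~\ref{limit of e^2text}, $e^{2s}\ext_{Y_s}(F)$ tends to a finite limit $\mathcal{L}_{q'}(F)=\sup_{F'}i(F,F')^2/\big(\sum_j m_j'i(G_j,F')^2\big)$ precisely when $i(F,V(q'))=0$ and diverges otherwise; since $V(q)$ and $V(q')$ share the same components $G_j$, the vanishing conditions $i(F,V(q))=0$ and $i(F,V(q'))=0$ coincide, so only these foliations survive the supremum, and for them $e^{2t}\ext_{X_t}(F)\to\mathcal{L}_q(F)$ as well. Thus $H(\mathcal{B}_q,\mathcal{B}_{q'})=\frac{1}{2}\log\sup_{i(F,V(q))=0}\mathcal{L}_q(F)/\mathcal{L}_{q'}(F)$, up to the constant.

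To compute this supremum I would show it equals $M:=\max_j m_j'/m_j$. The lower bound is obtained from the test foliations $F=G_k$, for which Corollary~\ref{limit for similar foliation} gives $\mathcal{L}_q(G_k)=1/m_k$ and $\mathcal{L}_{q'}(G_k)=1/m_k'$, hence $\mathcal{L}_q(G_k)/\mathcal{L}_{q'}(G_k)=m_k'/m_k$. For the upper bound, $m_j\ge M^{-1}m_j'$ for every $j$ gives $\sum_j m_j i(G_j,F')^2\ge M^{-1}\sum_j m_j'i(G_j,F')^2$ for all $F'$, which together with $\mathcal{L}_q(F)=\sup_{F'}i(F,F')^2/\big(\sum_j m_j i(G_j,F')^2\big)$ yields $\mathcal{L}_q(F)\le M\,\mathcal{L}_{q'}(F)$. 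Interchanging $q$ and $q'$ gives $H(\mathcal{B}_{q'},\mathcal{B}_q)=\frac{1}{2}\log\max_j m_j/m_j'$ up to the opposite constant, so the basepoint constants cancel on symmetrizing and
$$\delta(\mathcal{B}_q,\mathcal{B}_{q'})=\frac{1}{2}\log\max_j\frac{m_j'}{m_j}+\frac{1}{2}\log\max_j\frac{m_j}{m_j'}=\frac{1}{2}\log\frac{R}{\rho}.$$
Comparing with the minimum found above gives $\min_{s,u}\lim_t d_{\mathcal{T}}(X_{s+t},Y_{u+t})=\frac{1}{2}\delta(\mathcal{B}_q,\mathcal{B}_{q'})$, as asserted.

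I expect the main obstacle to be the rigorous justification of the detour-cost computation: the interchange of the supremum over measured foliations with the iterated limit in $t$ and $s$, and the claim that the foliations with $i(F,V(q))\neq0$ genuinely drop out of the supremum in the limit rather than contributing to it. This needs uniform control of $e^{2t}\ext_{X_t}(F)$ and $e^{2s}\ext_{Y_s}(F)$ as $F$ ranges over the compact space of projective measured foliations, together with the growth dichotomy of Theorem~\ref{limit of e^2text} (a finite limit exactly when $i(F,V(q))=0$, divergence otherwise). A secondary, bookkeeping point is to check that the basepoint-dependent constants in $H(\mathcal{B}_q,\mathcal{B}_{q'})$ and $H(\mathcal{B}_{q'},\mathcal{B}_q)$ are negatives of one another and hence cancel; this is forced by the basepoint-independence of the detour metric, but I would verify it directly from the defining limits of the two Busemann functions.
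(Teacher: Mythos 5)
Your first paragraph is correct and is essentially the paper's own mechanism in different packaging. The paper proves $\lim_t d_{\mathcal{T}}(X_t,Y_{t+\sigma})\geq\frac{1}{2}\delta(\mathcal{B}_q,\mathcal{B}_{q'})$ for every shift by the AM--GM inequality $\max\{A,B\}\geq\sqrt{AB}$ together with the shift-invariance of the detour distance, and then exhibits the explicit $\sigma=\frac{1}{4}\log(A/B)$ achieving equality; your parametrization of all shifts by the single parameter $\mu=e^{2(s-u)}$, justified by the clean computation $m_k^{(s)}=e^{2s}m_k$ via Corollary~\ref{limit for similar foliation} applied to the shifted ray, is an explicit version of the same minimization, and your value $\frac{1}{4}\log(R/\rho)$ agrees with the paper's.

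The genuine divergence, and the gap, is in your second and third paragraphs: the paper does not prove the detour-distance formula at all --- it cites Amano's result (Proposition~\ref{detour distance}, \cite{Ama2014}, generalizing Walsh's Corollary~3 in \cite{Wal2019}) --- whereas you attempt to derive it, and the derivation is incomplete at exactly the hard step. Two concrete problems. First, the asserted dichotomy ``$e^{2s}\mathrm{Ext}_{Y_s}(F)$ tends to a finite limit precisely when $i(F,V(q'))=0$'' is false as stated: Theorem~\ref{limit of e^2text} can give $+\infty$ even for $F$ with $i(F,V(q'))=0$. For instance, if the critical graph of $V(q')$ contains two closed saddle-connection curves $\alpha,\beta$ with $i(\alpha,\beta)>0$, then $i(G_j,\beta)=0$ for all $j$, so taking $F=\alpha$ and $F'=\beta$ makes the denominator $\sum_j m_j'\,i(G_j,\beta)^2$ vanish while $i(\alpha,\beta)>0$, whence $\mathcal{L}_{q'}(\alpha)=+\infty$ (these are precisely the saddle-connection curves appearing in Masur's hypothesis). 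Your two-sided pointwise bounds $(M')^{-1}\mathcal{L}_{q'}(F)\leq\mathcal{L}_q(F)\leq M\,\mathcal{L}_{q'}(F)$ do show the two finiteness loci coincide, so this is repairable bookkeeping, but the dichotomy you rely on needs restating. Second, and more seriously, the interchange $\lim_{t}\lim_{s}\sup_F=\sup_F\lim_t\lim_s$ in the detour-cost computation is asserted (and flagged), not proved. Pointwise convergence on the compact space $\mathcal{PMF}$ does not yield it: $\mathcal{L}_{q'}$ blows up along sequences approaching the frontier of $\mathcal{MF}_0(V(q'))$, and the convergence of $e^{2s}\mathrm{Ext}_{Y_s}(F)$ is not uniform there, so foliations with small but nonzero $i(F,V(q'))$ could a priori dominate the supremum at each finite stage. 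Compare how much work the paper needs for the single-limit analogues: Theorems~\ref{equality of Ext in MF_1} and~\ref{equality of Ext in MF_0} each require a contradiction argument and, in the $\mathcal{MF}_0$ case, a duality trick via Lemma~\ref{inequality} to transfer the supremum to $\mathcal{MF}_1(V(q))$; your iterated-limit version is at least as delicate, and supplying it would amount to reproving Amano's theorem.

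In short: your computation that the minimal limiting distance is $\frac{1}{4}\log(R/\rho)$ is complete and correct, and your identity $\delta(\mathcal{B}_q,\mathcal{B}_{q'})=\frac{1}{2}\log(R/\rho)$ is the right target; but as written the proof of the latter has an open core. Citing Proposition~\ref{detour distance} closes it immediately, after which your argument and the paper's coincide in substance.
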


The following corollary, which has been proved by Walsh in \cite{Wal2019}, is a direct combination of Corollary \ref{asymptotic} and Proposition \ref{minimum of the limiting distance}.

\begin{corollary}\label{identical Busemann points}
    The Busemann points $\mathcal{B}_q$ and $\mathcal{B}_{q'}$ are identical if and only if $q$ and $q'$ are modularly equivalent.
\end{corollary}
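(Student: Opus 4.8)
The plan is to deduce the statement formally from Corollary~\ref{asymptotic} and Proposition~\ref{minimum of the limiting distance}, the only additional ingredient being the defining property of the detour metric. First I would record that $\delta$ restricts to a genuine metric on the Busemann points of the horofunction boundary of $\mathcal{T}(S)$ (with the value $+\infty$ permitted between points lying in different parts), so that in particular $\mathcal{B}_q=\mathcal{B}_{q'}$ holds if and only if $\delta(\mathcal{B}_q,\mathcal{B}_{q'})=0$. This reduces the corollary to the single assertion that the detour distance $\delta(\mathcal{B}_q,\mathcal{B}_{q'})$ vanishes precisely when $q$ and $q'$ are modularly equivalent.

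Next I would apply Proposition~\ref{minimum of the limiting distance}, which identifies $\tfrac12\delta(\mathcal{B}_q,\mathcal{B}_{q'})$ with the minimum, taken over all shifts of the two initial points along their \teich geodesics, of the limiting \teich distance. Hence $\delta(\mathcal{B}_q,\mathcal{B}_{q'})=0$ exactly when this minimum is attained at the value $0$, i.e. when there are shifted initial points for which the resulting rays become asymptotic. By Corollary~\ref{asymptotic} the shifted rays are asymptotic if and only if the corresponding quadratic differentials are modularly equivalent, so the decisive observation is that modular equivalence is insensitive to such shifts: moving the initial point of $\mathcal{R}_{q,X}$ by $s$ scales $V(q)$ by $e^{s}$ and $H(q)$ by $e^{-s}$, so each modulus $m_j=a_j/i(G_j,H(q))$ is multiplied by the common factor $e^{2s}$, and a shift of $Y$ by $s'$ multiplies every $m_j'$ by a common factor. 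The proportionality $m_j=C\,m_j'$ for all $j$ is therefore preserved, only the constant $C$ being rescaled; so some shift makes the rays modularly equivalent if and only if $q$ and $q'$ are already modularly equivalent. (As a cross-check one can minimize the formula of Theorem~\ref{limiting distance}(i) over the induced rescaling $\lambda=e^{2(s-s')}$ of the ratios $m_j/m_j'$, whose minimal value is $0$ precisely when all these ratios agree.) Chaining the equivalences then yields
\begin{align*}
\mathcal{B}_q=\mathcal{B}_{q'}
&\iff \delta(\mathcal{B}_q,\mathcal{B}_{q'})=0
\iff \text{the minimal limiting \teich distance vanishes}\\
&\iff \text{$q$ and $q'$ are modularly equivalent,}
\end{align*}
which is the claim.

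I expect the only genuinely delicate point to be the first reduction: one must verify that on the relevant part of the horofunction boundary the detour metric really separates points, so that a vanishing detour distance is equivalent to coincidence of the two Busemann points. This is precisely the property furnished by Walsh's identification of \teich rays with Busemann points together with the standard fact that the detour metric is a metric on each part. The rest—invoking Proposition~\ref{minimum of the limiting distance} and Corollary~\ref{asymptotic} and tracking the uniform rescaling of the moduli under a shift—is routine, which is exactly why the corollary is a direct combination of these two earlier results.
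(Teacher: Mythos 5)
Your proof is correct and takes essentially the same route as the paper, which obtains the corollary precisely as a direct combination of Corollary~\ref{asymptotic} and Proposition~\ref{minimum of the limiting distance}. The details you supply beyond that---that the detour metric separates Busemann points, so $\mathcal{B}_q=\mathcal{B}_{q'}$ iff $\delta(\mathcal{B}_q,\mathcal{B}_{q'})=0$, and that shifting an initial point by $s$ rescales every modulus $m_j$ by the common factor $e^{2s}$, making modular equivalence shift-insensitive---are exactly the facts the paper's one-line proof leaves implicit.
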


This paper is organized as follows. In Section \ref{section2}, we recall some background on \teich spaces, measured foliations, quadratic differentials, extremal lengths and \teich rays. In Section \ref{section3}, we investigate the asymptotic behavior of extremal length along \teich rays based on the results of Walsh \cite{Wal2019}. Then we prove Theorem \ref{limit of e^2text} and Corollary \ref{limit for similar foliation}. In Section \ref{section4}, the ratios of extremal lengths along pairs of \teich rays is explored. Furthermore, by Kerckhoff's formula, we prove Theorem \ref{limiting distance} and Corollary \ref{asymptotic}. In Section \ref{section5}, we review some background and results on the horofunction compactification of \teich space and detour metric, and we prove Proposition \ref{minimum of the limiting distance} and Corollary \ref{identical Busemann points}.


\section{Preliminaries}\label{section2}

\subsection{Teichm\"uller spaces}

Let $X$ be a Riemann surface and $f$ be an orientation-preserving homeomorphism from $S$ onto $X$. The pair $(X,f)$ is called a marked Riemann surface, where $f$ serves as the marking of $X$. Two marked Riemann surfaces $(X_1,f_1)$ and $(X_2,f_2)$ are said to be \teich equivalent if there exists a conformal homeomorphism $\phi:X_1\to X_2$ such that $f_2^{-1}\circ\phi\circ f_1$ is homotopic to the identity mapping on $S$. The \teich equivalent classes containing $(X,f)$ is denoted by $[X,f]$, and the \teich space $\mathcal{T}(S)$ of $S$ is defined as the collection of all such equivalence classes. For notational simplicity, we usually use the Riemann surface $X$ to represent the class $[X,f]$. 

The \teich distance between two Riemann surfaces $X_1, X_2\in\mathcal{T}(S)$ is given by:
$$
d_{\mathcal{T}}(X_1,X_2)=\frac{1}{2}\inf_{h}\{\log K(h)\},
$$
where the infimum is over all quasiconformal mapping $h:X_1\to X_2$ such that $f_2^{-1}\circ h\circ f_1$ is homotopic to the identity mapping on $S$, and $K(h)$ denotes the maximal quasiconformal dilatation of $h$. The \teich space $\mathcal{T}(S)$ with the \teich metric $d_\mathcal{T}$ is a complete geodesic metric space.


\subsection{Measured foliations}\label{measured foliations}

Let $P$ be a finite set composed of finite points and punctures on $S$. A measured foliation on $S$ consists of a finite set $P$ and an open covering $\{U_j\}$ of $S\setminus P$ with a real-valued $C^1$-function $v_j$ on each $U_j$ such that $|dv_j|=|dv_i|$ on $U_j\cap U_i$. Each $p\in P$ is called a singularity of the measured foliation, and there is a neighborhood $V_p$ of $p$ with complex coordinate $z$ such that $|dv_j|=|\Im (z^{k/2}dz)|$ on $U_j\cap V_p$ for some $k\in\mathbb{Z}$. If $p\in P$ is a puncture of $S$, $k\geq -1$. Otherwise, $k\geq 0$. We denote by $(\mathcal{F},\mu)$ the measured foliation on $S$, where $\mu$ is a transverse invariant measure which is given by $|dv_j|$ in $U_j$, and $\mathcal{F}$ denotes the underlying foliation structure without considering the measure. The leaves of $\mathcal{F}$ correspond to curves on $S$ along which the functions $v_j$ remain constant.

Let $\mathcal{S}$ be the set of homotopy classes of essential non-peripheral unoriented simple closed curves on $S$. The intersection number of a measured foliation $(\mathcal{F},\mu)$ and a $\alpha\in\mathcal{S}$ is defined by
$$
i((\mathcal{F},\mu),\alpha)=\inf_{\alpha'\in\alpha}\int_{\alpha'}d\mu,
$$
where the infimum is taken over all simple closed curves in $\alpha$. Two measured foliations $(\mathcal{F}_1,\mu_1)$ and $(\mathcal{F}_2,\mu_2)$ are equivalent if 
$$
i((\mathcal{F}_1,\mu_1),\alpha)=i((\mathcal{F}_2,\mu_2),\alpha)
$$
holds for any $\alpha\in\mathcal{S}$. Equivalently, they are Whitehead equivalence, meaning that one can be transformed into the other by a finite sequence of homeomorphisms of the surface isotopic to the identity, which preserve the transverse measures, together with Whitehead moves, where a Whitehead move is a deformation that collapses a leaf joining two singularities to a point, which may be a puncture, as well as the inverse move of such a collapse.

We denote by $F$ the equivalent class containing $(\mathcal{F},\mu)$. Therefore, the function $i(F,\cdot)$ is an element of $\mathbb{R}^\mathcal{S}_{\geq 0}$, where $\mathbb{R}^\mathcal{S}_{\geq 0}$ is the space of non-negative functions on $\mathcal{S}$ topologized by pointwise convergence. We denote by $\mathcal{MF}$ the space of equivalent classes of measured foliations on $S$ equipped with the weak topology induced by the intersection functions in $\mathbb{R}^\mathcal{S}_{\geq 0}$. The intersection function can extend continuously to $\mathcal{MF}\times\mathcal{MF}$ (cf. \cite{Bon1988} and \cite{Ree1981}).

A saddle connection of $F$ is a leaf of $F$ joining two singularities, which may be identical. The union of all saddle connections of $F$ is denoted by $\Gamma_F$. It is known that each connected component of $S\setminus\Gamma_F$ is either a cylinder foliated by closed leaves or a minimal domain in which each leaf is dense. For a minimal domain $\Omega$ of $F$, there exists finite number of ergodic transverse measures $\{\mu_i\}_{i=1}^k$ on $\Omega$, where $k$ is determined by the topology of $\Omega$, such that any transverse invariant measure $\mu$ on $\Omega$ can be expressed as $\mu=\sum_{i=1}^ka_i\mu_i$ for $a_i\geq 0$. The measured foliation $F$ is uniquely ergodic on a minimal domain $\Omega$ if the transverse measure on $\Omega$ is unique up to scalar multiplication. We say that a measured foliation $F$ is uniquely ergodic if $S\setminus\Gamma_F$ contains only one minimal domain with a uniquely ergodic transverse measure. Then, the measured foliation $F$ can be represented as
$$
F=\sum_{j=1}^na_jG_j,
$$
where $G_j$ is either a cylindrical component or a minimal domain endowed with an ergodic transverse measure. The coefficients satisfy $a_j>0$ when $G_j$ is a cylindrical component and $a_j\geq 0$ when $G_j$ is a minimal component.

Two measured foliations $F_1, F_2\in\mathcal{MF}$ are topologically equivalent if there exists a homeomorphism $f:S\setminus\Gamma_{F_1}\to S\setminus\Gamma_{F_2}$ that is isotopic to the identity and maps each leaf of $F_1$ to a corresponding leaf of $F_2$. In this case, $F_1$ and $F_2$ can be expressed as
$$
F_1=\sum_{j=1}^na_jG_j,\quad F_2=\sum_{j=1}^nb_jG_j,
$$
where $G_j$ is either a cylindrical component or a minimal component. Furthermore, we say that $F_1$ and $F_2$ are absolutely continuous if they are topologically equivalent and their corresponding coefficients with positive values coincide. Then, for absolutely continuous foliations, we retain only the components with positive coefficients such that $a_j,b_j>0$ for all $j=1,\cdots,n$.

Since there is a natural bijection between measured foliations on any two surfaces in $\mathcal{T}(S)$, achieved through pullback of measured foliations to the corresponding measured foliations on $S$, we can extend the notions of topologically equivalent and absolutely continuous to pairs of measured foliations on distinct Riemann surfaces. For simplicity, we shall denote by $F\in\mathcal{MF}$ the corresponding measured foliation on any surface $X\in\mathcal{T}(S)$.


\subsection{Quadratic differentials}

A holomorphic quadratic differential $q$ on a Riemann surface $X$ is a tensor of the form $q(z)dz^2$, where $q$ is a holomorphic function that is allowed to have simple poles at the punctures of $X$. Each $q$ induces a singular flat metric $|q|=|q(z)||dz|^2$ on $X$. The zeros and poles of $q$ refer to the critical points of $q$, which form the singularities of the flat metric on $X$. The space of all holomorphic quadratic differentials on $X$ is denoted by $Q(X)$ and is a Banach space with the norm $\|q\|=\int_X|q(z)|dxdy$. 

Let $q$ be a holomorphic quadratic differential on $X$ and $I\subset\mathbb{R}$ be an interval. A trajectory of $q$ is a smooth path $\gamma:I\to X$ on $X$ such that the $\arg{q(\gamma(t))\gamma'(t)^2}$ is constant on $I$. A trajectory of $q$ is maximal if it is not a proper sub-trajectory of any trajectory on $X$. There exists a canonical coordinate chart $z=x+iy$ for each point in $X$ except for critical points of $q$, such that $q=dz^2$ in the chart. A vertical trajectory of $q$ is a maximal trajectory on $X$ such that it is a vertical line in each canonical coordinate chart. The collection of all critical trajectories of $q$ forms a measured foliation $V(q)$ on $X$, called the vertical foliation of $q$, whose transverse measure is given by $|dx|$. Similarly, a horizontal trajectory of $q$ is a maximal trajectory on $X$ which is a horizontal line in a canonical coordinate chart. The horizontal foliation $H(q)$ of $q$ is formed by all horizontal trajectories of $q$, equipped with the transverse measure induced by $|dy|$.

A critical trajectory of $V(q)$ is a vertical trajectory with an endpoint at a critical point of $q$. The critical graph $\Gamma_q$ of $V(q)$ is the union of all critical trajectories of $V(q)$ connecting critical points of $q$. A holomorphic quadratic differential $q$ is called a Jenkins-strebel differential if all the leaves of $V(q)$ are closed except for the critical graph $\Gamma_q$. Hubbard and Masur \cite{HM1979} showed that for each measured foliation $F\in\mathcal{MF}$, there exists a holomorphic quadratic differential $q$ on $X$ whose vertical foliation $V(q)$ realizes the measured foliation $F$.


\subsection{Extremal length}

We recall the extremal length of a simple closed curve $\alpha\in\mathcal{S}$ on $[X,f]\in\mathcal{T}(S)$. Let $\rho=\rho(z)|dz|$ be a Borel measurable conformal metric on $X$. Under this metric $\rho$, the length of $\alpha\in\mathcal{S}$ on $X$ is 
$$
\ell_\rho(\alpha)=\inf_{\gamma'\in f(\alpha)}\int_{\gamma'}\rho(z)|dz|,
$$
where $\gamma'$ ranges over all simple closed curves in the homotopy class $f(\alpha)$ corresponding to $\alpha\in\mathcal{S}$. The area of $X$ is 
$$
\mathrm{Area}_\rho(X)=\iint_X\rho(z)^2dxdy.
$$
The extremal length $\mathrm{Ext}_X(\alpha)$ of $\alpha$ on $X$ is defined by 
$$
\mathrm{Ext}_X(\alpha)=\sup_\rho\frac{\ell_\rho(\alpha)^2}{\mathrm{Area}_\rho(X)},
$$
where $\rho$ takes over all Borel measurable conformal metric on $X$ with $\mathrm{Area}_\rho(X)<\infty$.

Kerckhoff \cite{Ker1980} showed that there is a unique continuous extension of the extremal length function from $\mathcal{S}$ to $\mathcal{MF}$ satisfying
$$
\mathrm{Ext}_X(tF)=t^2\mathrm{Ext}_X(F), \quad t\in\mathbb{R}_{>0},
$$
for any $F\in\mathcal{MF}$ and $X\in\mathcal{T}(S)$. Moreover, Kerckhoff established a useful formula of the \teich distance by extremal length.

\begin{lemma}[Kerckhoff's formula]\label{Kerckhoff's formula}
    The \teich distance between $X, Y\in\mathcal{T}(S)$ is 
    $$
    d_\mathcal{T}(X,Y)=\frac{1}{2}\log\sup_{\alpha\in\mathcal{S}}\frac{\mathrm{Ext}_Y(\alpha)}{\mathrm{Ext}_X(\alpha)}.
    $$
\end{lemma}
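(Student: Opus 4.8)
The plan is to prove Kerckhoff's formula by establishing the two inequalities separately, handling the supremum over $\mathcal{S}$ through the continuous extension of extremal length to $\mathcal{MF}$. Throughout I would rely on two inputs beyond the definitions above: the quasiconformal distortion property of extremal length, and the classical identity $\mathrm{Ext}_X(H(q))=\|q\|=\mathrm{Ext}_X(V(q))$ relating the extremal length of the horizontal (equivalently vertical) foliation of a holomorphic quadratic differential $q$ to its $L^1$-norm, the extremal metric being $|q|$ itself.

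First I would dispose of the easy inequality $\frac{1}{2}\log\sup_{\alpha}\frac{\mathrm{Ext}_Y(\alpha)}{\mathrm{Ext}_X(\alpha)}\le d_{\mathcal{T}}(X,Y)$. For any quasiconformal map $h\colon X\to Y$ with $f_2^{-1}\circ h\circ f_1$ homotopic to the identity and maximal dilatation $K(h)$, the map carries the homotopy class of $\alpha$ on $X$ to that on $Y$, so the quasiconformal distortion of extremal length gives $\mathrm{Ext}_Y(\alpha)\le K(h)\,\mathrm{Ext}_X(\alpha)$ for every $\alpha\in\mathcal{S}$. Hence $\sup_{\alpha}\frac{\mathrm{Ext}_Y(\alpha)}{\mathrm{Ext}_X(\alpha)}\le K(h)$, and taking the infimum over all admissible $h$ yields $\sup_{\alpha}\frac{\mathrm{Ext}_Y(\alpha)}{\mathrm{Ext}_X(\alpha)}\le e^{2d_{\mathcal{T}}(X,Y)}$.

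For the reverse inequality I would invoke the \teich existence theorem to realize $Y$ as the point $X_t$ on the \teich ray $\mathcal{R}_{q,X}$ determined by a unit-norm holomorphic quadratic differential $q$ on $X$, where $t=d_{\mathcal{T}}(X,Y)$. Writing $q_t$ for the terminal differential on $X_t$, the defining affine stretch multiplies the transverse measure of the horizontal foliation by $e^{-t}$, so that $H(q_t)=e^{-t}H(q)$ as elements of $\mathcal{MF}$ while $\|q_t\|=\|q\|=1$. Applying the norm identity and the homogeneity $\mathrm{Ext}(cF)=c^2\mathrm{Ext}(F)$ then gives $\mathrm{Ext}_{X_t}(H(q))=e^{2t}\mathrm{Ext}_{X_t}(H(q_t))=e^{2t}\|q_t\|=e^{2t}$, whereas $\mathrm{Ext}_X(H(q))=\|q\|=1$. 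Thus the single measured foliation $H(q)$ already realizes the ratio $e^{2t}=e^{2d_{\mathcal{T}}(X,Y)}$. To transfer this from $\mathcal{MF}$ back to $\mathcal{S}$, I would choose weighted simple closed curves $w_n\gamma_n\to H(q)$ (using the density of weighted curves in $\mathcal{MF}$ and continuity of extremal length on $\mathcal{MF}$); the weights cancel in the ratio, so $\frac{\mathrm{Ext}_Y(\gamma_n)}{\mathrm{Ext}_X(\gamma_n)}=\frac{\mathrm{Ext}_Y(w_n\gamma_n)}{\mathrm{Ext}_X(w_n\gamma_n)}\to e^{2d_{\mathcal{T}}(X,Y)}$, whence $\sup_{\alpha\in\mathcal{S}}\frac{\mathrm{Ext}_Y(\alpha)}{\mathrm{Ext}_X(\alpha)}\ge e^{2d_{\mathcal{T}}(X,Y)}$. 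Combining the two bounds gives equality.

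The genuinely hard part is the reverse inequality, and it concentrates in two external inputs: the \teich existence theorem, which lets me place $X$ and $Y$ on a common geodesic ray and thereby pin down the extremal stretch factor exactly, and the identity $\mathrm{Ext}_X(H(q))=\|q\|$, which is a substantive theorem in its own right (identifying $|q|$ as the extremal metric for its foliation). By contrast, the upper bound is a routine application of quasiconformal distortion, and the final passage from $\mathcal{MF}$ to $\mathcal{S}$ is a soft density-and-continuity argument once the homogeneity of extremal length is used to cancel the weights.
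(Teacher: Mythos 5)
The paper does not prove this lemma: it is imported from Kerckhoff \cite{Ker1980} as a known result, so there is no internal proof to compare against. Your argument is correct, and it is essentially Kerckhoff's original one. The upper bound via quasi-invariance of extremal length, $\ext_Y(\alpha)\le K(h)\,\ext_X(\alpha)$ for every admissible quasiconformal $h$ and every $\alpha\in\mathcal{S}$, is sound. For the lower bound, your use of Teichm\"uller's existence theorem to write $Y=X_t$ with $t=d_{\mathcal{T}}(X,Y)$, together with the identity $\ext_X(H(q))=\|q\|$ (the Jenkins--Hubbard--Masur--Gardiner theorem that $|q|$ is the extremal metric for its own foliation), correctly produces the extremizing direction: the affine stretch $x+iy\mapsto e^tx+ie^{-t}y$ scales the horizontal transverse measure by $e^{-t}$, so $H(q_t)=e^{-t}H(q)$ in $\mathcal{MF}$, and quadratic homogeneity gives $\ext_{X_t}(H(q))=e^{2t}\|q_t\|=e^{2t}$ against $\ext_X(H(q))=1$. (Note in passing that the paper's formula for the Teichm\"uller map, $z\mapsto e^tx+e^{-t}y$, has a typo; your reading with the factor $i$ is the intended one.) The return from $\mathcal{MF}$ to $\mathcal{S}$ via density of weighted simple closed curves and Kerckhoff's continuity theorem is also fine, since the weights cancel in the ratio and the limiting denominator $\ext_X(H(q))=1$ is nonzero. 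Two small points worth making explicit: the case $X=Y$ must be handled separately (Teichm\"uller's theorem presupposes distinct points, but the formula is trivial there), and the norm identity $\ext_X(H(q))=\|q\|$, while a substantive theorem as you say, is logically independent of the distance formula, so no circularity is incurred. You have correctly isolated these as the only genuinely nontrivial inputs.
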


\begin{xrem}
    Since the unique continuous extension of the extremal length function to $\mathcal{MF}$ and the density of the set $\{t\alpha\mid t\in\mathbb{R}_{>0}, \alpha\in\mathcal{S}\}$ in $\mathcal{MF}$, the \teich distance between $X, Y\in\mathcal{T}(S)$ is also given by
    $$
    d_\mathcal{T}(X,Y)=\frac{1}{2}\log\sup_{F\in\mathcal{MF}}\frac{\mathrm{Ext}_Y(F)}{\mathrm{Ext}_X(F)}.
    $$
    Let $\mathcal{PMF}$ denote the projective space of measured foliations and $[F]$ be the equivalent class in $\mathcal{PMF}$ containing $F\in\mathcal{MF}$. Then, we also have
    $$
    d_\mathcal{T}(X,Y)=\frac{1}{2}\log\sup_{[F]\in\mathcal{PMF}}\frac{\mathrm{Ext}_Y(F)}{\mathrm{Ext}_X(F)}.
    $$
\end{xrem}

Extremal length and intersection number are continuous on $\mathcal{MF}$, and a useful inequality between them can be derived from the work by Gardiner and Masur in \cite{GM1991}, which is also given by Minsky \cite{Min1993} and Ivanov \cite{Iva2001}.

\begin{lemma}\label{inequality}
    Let $F$ and $G$ be two measured foliations in $\mathcal{MF}$ and $X\in\mathcal{T}(S)$. Then
    $$
    \mathrm{Ext}_X(F)\mathrm{Ext}_X(G)\geq i(F,G)^2.
    $$
    Moreover, the inequality will be an equality if and only if $F$ and $G$ are realized by the horizontal and vertical foliations of a holomorphic quadratic differential on $X$.
\end{lemma}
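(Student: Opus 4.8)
The plan is to establish the inequality by the classical length--area method applied to the singular flat metric of the Hubbard--Masur differential of $F$, and then to pin down the equality case by invoking the uniqueness of extremal metrics.

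To begin, I would apply the Hubbard--Masur theorem \cite{HM1979} to produce the holomorphic quadratic differential $q_F$ on $X$ with $V(q_F)=F$, for which the standard identification of extremal length with the differential norm gives $\mathrm{Ext}_X(F)=\|q_F\|$. Writing $z=x+iy$ for the natural coordinate in which $q_F=dz^2$, the transverse measure of $V(q_F)=F$ is $|dx|$, and the singular flat metric $\rho=|q_F|^{1/2}$ satisfies $\mathrm{Area}_\rho(X)=\|q_F\|$.

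For the inequality I would use $\rho$ as a competitor in the definition of $\mathrm{Ext}_X(\alpha)$ for $\alpha\in\mathcal{S}$. For every representative $\gamma'\in f(\alpha)$ one has $\int_{\gamma'}\rho\,|dz|=\int_{\gamma'}|dz|\geq\int_{\gamma'}|dx|$, and taking the infimum over $\gamma'$ gives $\ell_\rho(\alpha)\geq i(\alpha,V(q_F))=i(\alpha,F)$. Hence
\[
\mathrm{Ext}_X(\alpha)\geq\frac{\ell_\rho(\alpha)^2}{\mathrm{Area}_\rho(X)}\geq\frac{i(\alpha,F)^2}{\|q_F\|}=\frac{i(\alpha,F)^2}{\mathrm{Ext}_X(F)},
\]
so that $\mathrm{Ext}_X(\alpha)\mathrm{Ext}_X(F)\geq i(\alpha,F)^2$. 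The inequality for an arbitrary $G\in\mathcal{MF}$ then follows from the joint continuity of $\mathrm{Ext}_X$ and $i(\cdot,\cdot)$ on $\mathcal{MF}$ together with the density of $\{t\alpha : t>0,\ \alpha\in\mathcal{S}\}$.

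The reverse implication of the equality statement is a direct computation: if $F=V(q)$ and $G=H(q)$ for a single differential $q$, then $\mathrm{Ext}_X(F)=\mathrm{Ext}_X(G)=\|q\|$ (using $H(q)=V(-q)$ and $\|-q\|=\|q\|$), while $i(F,G)=\int_X|dx|\,|dy|=\|q\|$ is the $|q|$-area of $X$, so both sides equal $\|q\|^2$. The forward implication is the main obstacle. Here I would run the argument with $G$ in place of $\alpha$, using the length--area formula in the extended form $\mathrm{Ext}_X(G)=\sup_\rho \ell_\rho(G)^2/\mathrm{Area}_\rho(X)$ valid for measured foliations: equality in the lemma forces both $\ell_\rho(G)^2/\mathrm{Area}_\rho(X)=\mathrm{Ext}_X(G)$, so that $\rho=|q_F|^{1/2}$ is an extremal metric for $G$, and $\ell_\rho(G)=i(F,G)$. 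The latter equality means the leaves realizing $G$ carry no $|dy|$-variation, i.e. they are horizontal for $q_F$, so $G=H(q_F)=V(-q_F)$ as measured foliations; comparing with $G=V(q_G)$ and applying the uniqueness part of the Hubbard--Masur theorem gives $q_G=-q_F$, exhibiting $F$ and $G$ as the vertical and horizontal foliations of the single differential $q_F$. The delicate points to justify rigorously are the extension of the length--area characterization to foliations and the extremal-metric uniqueness underlying the geodesic (horizontal-leaf) conclusion, both of which rest on Beurling's criterion for extremality and on Hubbard--Masur uniqueness.
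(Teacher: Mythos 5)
The paper itself does not prove this lemma: it is quoted from Gardiner--Masur \cite{GM1991} (see also Minsky \cite{Min1993} and Ivanov \cite{Iva2001}), so there is no internal argument to compare yours against; your proposal must stand on its own. The inequality half of your argument does: taking the Hubbard--Masur differential $q_F$ with $V(q_F)=F$, using $\rho=|q_F|^{1/2}$ as a test metric to get $\ell_\rho(\alpha)\geq i(\alpha,F)$, and then extending from weighted simple closed curves to $\mathcal{MF}$ by continuity and density is exactly the standard length--area proof and is correct, as is the reverse implication of the equality case via $\mathrm{Ext}_X(V(q))=\mathrm{Ext}_X(H(q))=\|q\|=i(V(q),H(q))$.

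The forward implication of the equality case, however, has a genuine gap as written. Your chain of equalities yields two conditions, that $\rho=|q_F|^{1/2}$ is extremal for $G$ and that $\ell_\rho(G)=i(F,G)$, but your deduction then uses only the second: ``no $|dy|$-variation, so the leaves of $G$ are horizontal, so $G=H(q_F)$.'' That last step is false on its own, because having horizontal leaves does not determine the transverse measure. If $H(q_F)$ is supported on a minimal domain that is not uniquely ergodic and $G$ is a different ergodic transverse measure on the same underlying horizontal foliation, then the leaves of $G$ are horizontal and $\ell_\rho(G)=i(F,G)$ still holds, yet $G$ is not a multiple of $H(q_F)$ and equality in the lemma genuinely fails for such $G$. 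The distinguishing condition is precisely the extremality of $\rho$ for $G$, which your argument never actually invokes in the deduction: from uniqueness of the extremal metric one gets $|q_G|=c\,|q_F|$ almost everywhere, so $q_G/q_F$ is holomorphic of constant modulus, hence a constant $\lambda$, and the length equality pins $\lambda$ to be a negative real, giving $q_G=-c\,q_F$. Two further points need attention even then: the formula $\mathrm{Ext}_X(G)=\sup_\rho\ell_\rho(G)^2/\mathrm{Area}_\rho(X)$ and the notion of $\ell_\rho(G)$ for measured foliations are substantive inputs (the heights/minimal-norm theory of Marden--Strebel and Gardiner--Masur), not formal extensions of the curve case; and since both sides of the equality are invariant under scaling $G$ while Hubbard--Masur uniqueness \cite{HM1979} fixes $H(q_F)$, your conclusion $q_G=-q_F$ cannot hold exactly for every $G$ attaining equality --- one only gets $G=\sqrt{c}\,H(q_F)$, so the equality case must be stated and proved up to positive multiples, a normalization your writeup should make explicit.
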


For any $F\in\mathcal{MF}$ and $X\in\mathcal{T}(S)$, there is a holomorphic quadratic differential $q$ on $X$ with $F$ as its vertical foliation by Hubbard and Masur \cite{HM1979}. Then, there exists a measure foliation $G$ which is the horizontal foliation of $q$, such that the equality in Lemma \ref{inequality} holds for $G$.


\subsection{Teichm\"uller rays}\label{Teichmuller rays}

Consider the quasiconformal mappings between two Riemann surfaces $X,Y\in\mathcal{T}(S)$. Teichm\"uller's theorem states that there exists a unique quasiconformal mapping that realizes the \teich distance $d_\mathcal{T}(X,Y)$, known as the \teich mapping. For the \teich mapping $f:X\to Y$, its Beltrami coefficient $\mu_f$ has the form $\mu_f=\frac{K(f)-1}{K(f)+1}\frac{\bar{q}}{|q|}$, where $q$ is a unit-norm holomorphic quadratic differential on $X$.

Let $f_{q,t}$ be the \teich mapping from $X$ to $X_t$ determined by the unit-norm holomorphic quadratic differential $q$ on $X$. There is a unit-norm holomorphic quadratic differential $q_t$ on $X_t$, called the terminal differential. Then, in the canonical coordinate $z=x+iy$ of $q$ and the canonical coordinate of $q_t$, the \teich mapping $f_{q,t}$ is given by 
$$
z\mapsto e^tx+e^{-t}y,
$$
where $t=\frac{1}{2}\log K(f_{q,t})$ is the arc length parameter with respect to \teich distance. 

The \teich geodesic $\mathcal{G}_{q,X}(t)$ through $X$ induced by a unit-norm holomorphic quadratic differential $q$ can be defined by 
\begin{equation*}
    \begin{array}{cccl}
        \mathcal{G}_{q,X}: & \mathbb{R} & \to & \mathcal{T}(S) \\
         & t & \mapsto & X_t=f_{q,t}(X), \\
    \end{array}
\end{equation*} 
where $f_{q,t}:X\to X_t$ is the Teichm\"uller mapping for the holomorphic quadratic differential $q$ on $X$. The \teich ray emanating from $X$ and induced by $q$ is defined by $\mathcal{R}_{q,X}(t):=\left.\mathcal{G}_{q,X}\right|_{\mathbb{R}_{\geq 0}}(t)$.

Let $\mathcal{R}_{q,X}(t)$ and $\mathcal{R}_{q^\prime,Y}(t)$ be two \teich rays. 
\begin{itemize}
    \item[(i)] The two \teich rays are divergent if 
    $$
    d_\mathcal{T}(X_t,Y_t)\to +\infty \text{ as } t\to\infty.
    $$
    \item[(ii)] The two \teich rays are bounded if there exists a constant $M>0$ such that $d_\mathcal{T}(X_t,Y_t)<M$ for any $t\geq 0$.
    \item[(iii)] The two \teich rays are asymptotic if 
    $$
    \lim_{t\to\infty}\inf_{Y'\in\mathcal{R}_{q',Y}(t)}d_{\mathcal{T}}(X_t,Y')=0.
    $$
    Then, there is a $\sigma\in\mathbb{R}$ such that $d_{\mathcal{T}}(X_t,Y_{t+\sigma})\to 0$ as $t\to\infty$.
\end{itemize}


\section{The asymptotic behavior of extremal length}\label{section3}

We investigate the asymptotic behavior of $e^{2t}\mathrm{Ext}_{X_t}(F)$ for any $F\in\mathcal{MF}$ along a \teich ray $\mathcal{R}_{q,X}(t)$. This study serves as a complement to the results concerning the asymptotic behavior of $e^{-2t}\mathrm{Ext}_{X_t}(F)$ in \cite{Wal2019}.

We adopt the convention that the supremum of a function is evaluated exclusively over the domain where the function is well-defined. Specifically, We consider $c/0=\infty$ to be well-defined for any $c>0$, while $0/0$ and $\infty/\infty$ remain undefined. 

\begin{lemma}[{\cite[Lemma~3]{Wal2019}}]\label{Walsh's inequality}
    Let $\mathcal{R}_{q,X}(t)$ be a \teich ray induced by a unit-norm holomorphic quadratic differential $q$ on $X$. The vertical foliation of $q$ is $V(q)=\sum_{j=1}^na_jG_j$, where $a_j\geq 0$. Then,
    $$
    e^{-2t}\mathrm{Ext}_{X_t}(F)\geq\sum_{j=1}^n\frac{a_ji(G_j,F)^2}{i(G_j,H(q))},
    $$
    for any $F\in\mathcal{MF}$ and $t\geq 0$.
\end{lemma}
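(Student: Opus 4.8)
The plan is to bound $\ext_{X_t}(F)$ from below straight from the definition of extremal length, by testing it against a conformal metric adapted to the cylinder/minimal-domain decomposition induced by $V(q)$, and then optimizing the choice of metric. Throughout I would work on $X_t$ with its terminal differential $q_t$, recalling that $V(q_t)=e^tV(q)=\sum_{j=1}^ne^ta_jG_j$ and $H(q_t)=e^{-t}H(q)$, that $\|q_t\|=1$, and that removing the critical graph decomposes $X_t$ into subsurfaces $R_j$ (cylinders or minimal domains) on which $G_j$ is supported. Since the horizontal coordinate of $q_t$ records the transverse measure of $V(q_t)$, one has $\|q_t|_{R_j}\|=a_j\,i(G_j,H(q))=:A_j$, and $\sum_jA_j=i(V(q),H(q))=\|q\|=1$.

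For nonnegative weights $\lambda_1,\dots,\lambda_n$ I would introduce the conformal metric $\rho$ on $X_t$ given by $\rho^2=\lambda_j|q_t|$ on each $R_j$, whose area is immediate: $\mathrm{Area}_\rho(X_t)=\sum_j\lambda_j\|q_t|_{R_j}\|=\sum_j\lambda_jA_j$. The core estimate is the lower bound on the $\rho$-length of $F$. On $R_j$ the metric $\rho$ is $\sqrt{\lambda_j}$ times the flat metric $|q_t|^{1/2}$, and the flat length of any arc dominates its horizontal extent, i.e.\ the total variation of the horizontal coordinate, which is exactly the transverse measure recorded by $V(q_t)$. Hence for any representative of $F$ the portion lying in $R_j$ has $\rho$-length at least $\sqrt{\lambda_j}\,i(F,V(q_t)|_{R_j})=\sqrt{\lambda_j}\,e^ta_j\,i(F,G_j)$; summing over the subsurfaces (the critical graph being negligible) gives $\ell_\rho(F)\ge e^t\sum_j\sqrt{\lambda_j}\,a_j\,i(F,G_j)$. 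Feeding this into $\ext_{X_t}(F)\ge\ell_\rho(F)^2/\mathrm{Area}_\rho(X_t)$ yields, for every choice of weights,
$$
\ext_{X_t}(F)\ \ge\ e^{2t}\,\frac{\bigl(\sum_j\sqrt{\lambda_j}\,a_j\,i(F,G_j)\bigr)^2}{\sum_j\lambda_jA_j}.
$$

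Finally I would optimize over $\lambda_j$: by Cauchy--Schwarz the right-hand side is maximized at $\lambda_j\propto\bigl(a_j\,i(F,G_j)/A_j\bigr)^2$, with optimal value $\sum_j(a_j\,i(F,G_j))^2/A_j=\sum_j a_j\,i(G_j,F)^2/i(G_j,H(q))$; dividing by $e^{2t}$ gives the claimed inequality. The main obstacle is the length estimate of the second step: one must set up the length of a measured foliation in a metric so that it genuinely splits over the $R_j$, and justify that the flat length of $F$ restricted to $R_j$ is at least $i(F,V(q_t)|_{R_j})$. This is transparent for cylindrical components, where $G_j$ is a weighted closed leaf, but on the minimal components carrying ergodic measures it requires the interpretation of the intersection number as an infimal transverse measure, together with care that the bound holds uniformly over all representatives of $F$ (and a routine treatment of degenerate terms where $a_j=0$ or $i(G_j,H(q))=0$ under the stated conventions).
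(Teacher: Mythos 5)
The paper itself offers no proof of this lemma --- it is quoted from Walsh \cite{Wal2019} --- so your attempt has to be judged on its own merits. Your length--area scheme is the natural one, and most of it is sound: the terminal data $V(q_t)=e^tV(q)$, $H(q_t)=e^{-t}H(q)$, $\|q_t\|=1$ are right; the inequality $\ext_{X_t}(F)\ge \ell_\rho(F)^2/\mathrm{Area}_\rho(X_t)$ applied to weighted simple closed curves and extended by density and continuity is the correct way to handle $\ell_\rho$ of a foliation; the length-splitting $\int_{\alpha'\cap R}\,|dx_t|\ge i(\alpha,V(q_t)|_R)$ is legitimate because the restricted transverse measure is itself a measured foliation class and the intersection number is an infimum over representatives; and the final Cauchy--Schwarz optimization over the weights $\lambda_j$ is computed correctly. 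In the case where every complementary piece of the critical graph is a cylinder or a uniquely ergodic minimal domain, your argument is a complete and rather clean direct proof.

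The genuine gap is that you silently assume the indecomposable components $G_j$ are in bijection with the subsurfaces $R_j$ cut out by the critical graph. As the paper notes in the introduction, a single minimal domain $\Omega$ may carry several projectively distinct ergodic transverse measures, so several components $G_j$, $j\in J_\Omega$, have the \emph{same} support $\Omega$. A conformal metric of your form can only assign one weight $\lambda_\Omega$ to $\Omega$, so your estimate yields for that domain the contribution $\bigl(\sum_{j\in J_\Omega}a_j\,i(F,G_j)\bigr)^2\big/\sum_{j\in J_\Omega}a_j\,i(G_j,H(q))$, which by Cauchy--Schwarz is at most the required $\sum_{j\in J_\Omega}a_j\,i(F,G_j)^2/i(G_j,H(q))$, with equality only when the ratios $i(F,G_j)/i(G_j,H(q))$ agree for all $j\in J_\Omega$. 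Distinct ergodic components are not proportional in $\mathcal{MF}$, and measured foliations are separated by their intersection functions, so there exist $F$ for which this is strict; hence \emph{no} choice of weights in your family of test metrics reaches the stated bound. Separating ergodic measures sharing a minimal domain is precisely the hard, non-uniquely-ergodic content of Walsh's lemma, and it is not accessible to a metric that only sees the underlying flat geometry of $\Omega$. If you want a short complete argument within this paper's logic (which imports Theorem \ref{Walsh's equality} anyway), note that the \teich map $X_t\to X_{t+s}$ is $e^{2s}$-quasiconformal, so $\ext_{X_{t+s}}(F)\le e^{2s}\ext_{X_t}(F)$; thus $e^{-2t}\ext_{X_t}(F)$ is nonincreasing in $t$ and therefore dominates its limit $\sum_{j=1}^na_j\,i(G_j,F)^2/i(G_j,H(q))$ --- though be aware that in Walsh's own development the inequality is proved first and used to establish the limit, so this shortcut is not available to a self-contained proof.
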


\begin{theorem}[{\cite[Theorem~1]{Wal2019}}]\label{Walsh's equality}
    Let $\mathcal{R}_{q,X}(t)$ be a \teich ray induced by a unit-norm holomorphic quadratic differential $q$ on $X$, and $V(q)=\sum_{j=1}^na_jG_j$, where $a_j\geq 0$. Then, 
    $$
    \lim_{t\to\infty}e^{-2t}\mathrm{Ext}_{X_t}(F)=\sum_{j=1}^n\frac{a_ji(G_j,F)^2}{i(G_j,H(q))},
    $$
    for any $F\in\mathcal{MF}$.
\end{theorem}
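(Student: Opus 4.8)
Since Lemma~\ref{Walsh's inequality} already provides $e^{-2t}\mathrm{Ext}_{X_t}(F)\ge\sum_{j=1}^{n}\frac{a_{j}i(G_{j},F)^{2}}{i(G_{j},H(q))}=:L$ for every $t\ge 0$, the whole content of the statement is the reverse inequality in the limit, namely $\limsup_{t\to\infty}e^{-2t}\mathrm{Ext}_{X_t}(F)\le L$. My plan is to produce, for large $t$, a conformal metric on $X_{t}$ witnessing $\mathrm{Ext}_{X_t}(F)\le e^{2t}(L+o(1))$ through the dual (modulus) description of extremal length, built from the flat structure of the terminal differential $q_{t}$. I first record the geometry of the ray: from the normal form $z\mapsto e^{t}x+ie^{-t}y$ in \S\ref{Teichmuller rays} one has $V(q_{t})=e^{t}V(q)$ and $H(q_{t})=e^{-t}H(q)$ as measured foliations on $S$ (consistency check: for $F=H(q)$ this forces $\mathrm{Ext}_{X_t}(H(q))=e^{2t}$, which is exactly the equality case of Lemma~\ref{Walsh's inequality}). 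Consequently each indecomposable component $G_{j}$ persists as a subsurface $Y_{j}$ of $X_{t}$---a flat cylinder or a minimal domain of $q_{t}$---whose two transverse extents are multiplied by $e^{t}$ and $e^{-t}$, so that the modulus of $Y_{j}$ grows like $e^{2t}a_{j}/i(G_{j},H(q))$, precisely the quantity appearing in $L$.

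The model case is when $q$ is Jenkins--Strebel. Then each $Y_{j}$ is a genuine flat cylinder with core $G_{j}$ and modulus $M_{j}(t)=e^{2t}a_{j}/i(G_{j},H(q))$; the leaves of $F$ cross $Y_{j}$ transversally to the core with total transverse weight $i(G_{j},F)$, so the extremal length of $F$ inside $Y_{j}$ equals $i(G_{j},F)^{2}M_{j}(t)=e^{2t}a_{j}i(G_{j},F)^{2}/i(G_{j},H(q))$. (For $F=H(q)$ this is $e^{2t}$ times the $|q|$-area of $Y_{j}$, and summing returns $e^{2t}\|q\|=e^{2t}$.) Because the cylinders fill $X_{t}$ up to the critical graph $\Gamma_{q_{t}}$, which carries zero area, assembling the extremal flat metrics on the pieces produces a test metric for the modulus of $F$ that yields $\mathrm{Ext}_{X_t}(F)\le e^{2t}(L+o(1))$; dividing by $e^{2t}$ and combining with Lemma~\ref{Walsh's inequality} settles the Jenkins--Strebel case.

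For a general ray the obstruction is a minimal component $G_{j}$, which is not a cylinder, so the contribution of $F$ cannot be read off from a single core curve. Here I would subdivide the minimal domain $Y_{j}$ into a bounded family of long, thin flat rectangles adapted to the $q_{t}$-coordinates, across which the leaves of $F$ run as nearly parallel arcs as $t\to\infty$. The decisive input is that a minimal domain supports only finitely many ergodic transverse measures (\S\ref{measured foliations}); the leaves of $G_{j}$ therefore equidistribute, and an equidistribution argument should convert the accumulated geometric crossing data of $F$ over the rectangles into the single intersection number $i(G_{j},F)$, while the accumulated modulus is controlled by $M_{j}(t)$. This would again give a contribution $i(G_{j},F)^{2}M_{j}(t)(1+o(1))$, and summing the cylindrical and minimal contributions while absorbing the transition regions near $\Gamma_{q_{t}}$ would give $\mathrm{Ext}_{X_t}(F)\le e^{2t}(L+o(1))$.

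The main obstacle is exactly this minimal-component estimate, which I expect to be the technical heart of the argument. One must choose the rectangle decomposition so that the boundary and singular contributions are $o(e^{2t})$ uniformly in $t$, and must show that equidistribution transfers the flat geometry to $i(G_{j},F)$ with no loss of constant in the limit---an estimate only up to multiplicative constants (as in Minsky's product-region bounds) will not suffice, since the sharp coefficient $a_{j}/i(G_{j},H(q))$ is needed. An alternative route is to approximate the minimal components by Jenkins--Strebel data, apply the cylindrical estimate, and pass to the limit using continuity of extremal length and of the intersection pairing on $\mathcal{MF}$; the obstacle then becomes justifying the interchange of this approximation with the limit $t\to\infty$, which again requires uniform control. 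In either route, the resulting upper bound together with the lower bound of Lemma~\ref{Walsh's inequality} forces $\lim_{t\to\infty}e^{-2t}\mathrm{Ext}_{X_t}(F)=L$.
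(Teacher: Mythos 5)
First, a point of comparison: the paper contains no proof of this statement to measure you against --- Theorem~\ref{Walsh's equality} is imported verbatim from \cite{Wal2019}, as is the lower bound of Lemma~\ref{Walsh's inequality} --- so your proposal must stand on its own. As written it is a program, not a proof, and you say so yourself: the minimal-component upper bound is left at the level of ``should convert'' and ``I expect''. But that step is not a technical refinement of your cylinder computation; it is the entire content of the theorem beyond what was already known. The Jenkins--Strebel case you sketch is Kerckhoff's result \cite{Ker1980}, and the uniquely ergodic minimal case is essentially the setting of \cite{Mas1980} and \cite{HLMQ}; Walsh's theorem is precisely the extension past these cases.

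The concrete failure point of your plan is the non-uniquely-ergodic case. You assume each indecomposable component $G_j$ ``persists as a subsurface $Y_j$ of $X_t$'' with modulus $M_j(t)=e^{2t}a_j/i(G_j,H(q))$, and you invoke equidistribution of leaves inside $Y_j$. But, as the paper recalls in \S\ref{measured foliations}, a single minimal domain may carry several projectively distinct ergodic measures, so in the decomposition $V(q)=\sum_{j}a_jG_j$ several distinct $G_j$ can be supported on the \emph{same} minimal domain: there is then one subsurface but several $j$'s, the quantity $a_j/i(G_j,H(q))$ is a formal modulus (the paper's $m_j$), not the conformal modulus of any subsurface of $X_t$, and your ``accumulated modulus controlled by $M_j(t)$'' has no geometric referent. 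Equidistribution fails in exactly this situation: finiteness of the ergodic measures does not give equidistribution, different leaves of the same minimal domain accumulate on different ergodic measures, and time averages along a single leaf need not converge, so the crossing data of $F$ over your rectangles does not resolve into one intersection number per component $G_j$ without a genuinely new idea --- this is the heart of the Lenzhen--Masur-type difficulty \cite{LM2010} and of Walsh's argument. A secondary slip compounds this: exhibiting a metric (a ``test metric'') bounds extremal length from \emph{below}, since extremal length is a supremum over metrics, and an admissible metric in the dual modulus problem likewise yields $\mathrm{Ext}\geq 1/\!\int\!\rho^2$; so even in the Jenkins--Strebel case your stated mechanism gives the inequality you already have from Lemma~\ref{Walsh's inequality}, and the needed upper bound $\mathrm{Ext}_{X_t}(F)\le e^{2t}(L+o(1))$ requires a different device (exact computation via the extremal metric on cylinders together with serial/parallel composition rules, a quasiconformal comparison, or a minimal-norm argument with the Hubbard--Masur differential of $F$). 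In short: the lower bound is quoted, the upper bound is the theorem, and the upper bound is missing where it matters.
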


Based on the results given by Walsh in \cite{Wal2019} and Lemma \ref{inequality}, we show the limit of $e^{2t}\mathrm{Ext}_{X_t}(F)$ for any $F\in\mathcal{MF}$ along a \teich ray.

\begin{proof}[\textbf{Proof of Theorem} \ref{limit of e^2text}]
    By Lemma \ref{inequality}, for any measured foliations $F,F'\in\mathcal{MF}$, we have
    $$
    \mathrm{Ext}_{X_t}(F)\mathrm{Ext}_{X_t}(F')\geq i(F,F')^2.
    $$
    Then, by Theorem \ref{Walsh's equality}, there is
    $$
    \liminf_{t\to\infty}e^{2t}\mathrm{Ext}_{X_t}(F)\geq\liminf_{t\to\infty}\frac{i(F,F')^2}{e^{-2t}\mathrm{Ext}_{X_t}(F')}=\frac{i(F,F')^2}{\sum_{j=1}^n\frac{a_ji(G_j,F')^2}{i(G_j,H(q))}}.
    $$
    Therefore,
    \begin{equation}\label{limtinf of e^2text}
        \liminf_{t\to\infty}e^{2t}\mathrm{Ext}_{X_t}(F)\geq\sup_{F'\in\mathcal{MF}}\frac{i(F,F')^2}{\sum_{j=1}^n\frac{a_ji(G_j,F')^2}{i(G_j,H(q))}}.
    \end{equation}

    For any $F\in\mathcal{MF}$ and $X_t$ on the \teich ray $\mathcal{R}_{q,X}(t)$, there exists a $H_t\in\mathcal{MF}$ by Lemma \ref{inequality}, such that 
    $$
    \mathrm{Ext}_{X_t}(F)\mathrm{Ext}_{X_t}(H_t)=i(F,H_t)^2.
    $$
    By Lemma \ref{Walsh's inequality}, we have
    $$
    e^{2t}\mathrm{Ext}_{X_t}(F)=\frac{i(F,H_t)^2}{e^{-2t}\mathrm{Ext}_{X_t}(H_t)}\leq\frac{i(F,H_t)^2}{\sum_{j=1}^n\frac{a_ji(G_j,H_t)^2}{i(G_j,H(q))}}\leq\sup_{F'\in\mathcal{MF}}\frac{i(F,F')^2}{\sum_{j=1}^n\frac{a_ji(G_j,F')^2}{i(G_j,H(q))}}.
    $$
    Then, 
    \begin{equation}\label{limtsup of e^2text}
        \limsup_{t\to\infty}e^{2t}\mathrm{Ext}_{X_t}(F)\leq\sup_{F'\in\mathcal{MF}}\frac{i(F,F')^2}{\sum_{j=1}^n\frac{a_ji(G_j,F')^2}{i(G_j,H(q))}}.
    \end{equation}

    By combining \eqref{limtinf of e^2text} and \eqref{limtsup of e^2text}, the existence of the limit of $e^{2t}\mathrm{Ext}_{X_t}(F)$ along the \teich ray $\mathcal{R}_{q,X}(t)$ is obtained, and we have 
    $$
    \lim_{t\to\infty}e^{2t}\mathrm{Ext}_{X_t}(F)=\sup_{F'\in\mathcal{MF}}\frac{i(F,F')^2}{\sum_{j=1}^n\frac{a_ji(G_j,F')^2}{i(G_j,H(q))}}
    $$
    for any $F\in\mathcal{MF}$.
\end{proof}

\begin{proof}[\textbf{Proof of Corollary} \ref{limit for similar foliation}]
    If the measured foliation $F$ can be expressed as $F=\sum_{j=1}^nc_jG_j$, where $c_j> 0$, then, by the Cauchy-Schwarz inequality, there is 
    $$
    \frac{i(F,F')^2}{\sum_{j=1}^n\frac{a_ji(G_j,F')^2}{i(G_j,H(q))}}=\frac{\left(\sum_{j=1}^nc_ji(G_j,F')\right)^2}{\sum_{j=1}^n\frac{a_ji(G_j,F')^2}{i(G_j,H(q))}}\leq\sum_{j=1}^n\frac{c_j^2i(G_j,H(q))}{a_j}
    $$
    for any $F'\in\mathcal{MF}$.

    The equality holds if and only if there is a $F'\in\mathcal{MF}$ such that 
    $$
    a_ji(G_j,F')=c_ji(G_j,H(q))
    $$
    for each indecomposable component $G_j$. This $F'$ satisfying the condition can be obtained by appropriately adjusting the horizontal foliation $H(q)$ in the intersection set with $G_j$. Let $\Omega$ be a minimal domain of $V(q)$, and fix a horizontal arc $\tau$ in $\Omega$. Any leaf starting on $\tau$ either terminates at a singularity of $V(q)$ or returns to $\tau$. Let $\mu_j$, $j\in J_\Omega\subseteq\{1,\cdots,n\}$, denote the ergodic transverse measure of $G_j$ supported on $\Omega$. Since the transverse measures $\{\mu_j\}_{j\in J_\Omega}$ are mutually singular, the horizontal arc $\tau$ admits a decomposition 
    $$
    \tau=\bigcup_{j\in J_\Omega}\tau_j,
    $$
    where $\{\tau_j\}_{j\in J_\Omega}$ are pairwise disjoint Borel subsets such that $\mu_j(\tau_k)=\mu_j(\tau)$ when $j=k$ and $\mu_j(\tau_k)=0$ otherwise. 
    
    Consider the lengths of leaves in $\Omega$ with respect to the flat metric induced by $q$. For each $j\in J_\Omega$, we rescale the lengths of leaves crossing $\tau_j$ by the factor $c_j/a_j$. Since each leaf in $\Omega$ has infinite length, this rescaling preserves the underlying foliation structure of $V(q)$ on $\Omega$. Similarly, for each cylindrical component $G_j$ of $V(q)$, we rescale the lengths of the closed leaves in $G_j$ by the factor $c_j/a_j$. Then, the lengths of saddle connections on the boundaries of cylinders and minimal domains are also rescaled by the corresponding factors. 

    Since a saddle connection of $V(q)$ may have different lengths on the boundaries of two cylinders or minimal domains, it is necessary to modify the leaves joining singularities such that the adjacent cylinders or minimal domains match along the leaves. This can be achieved by introducing additional singularities and a saddle connection. Suppose that a saddle connection has length $\ell_i$ on one boundary and $\ell_j$ on the other boundary, with $\ell_i>\ell_j$. Consider the saddle connection of length $\ell_i$. We pick three points on this saddle connection such that it is divided into four arcs, with the two middle subarcs having equal length $\frac{\ell_i-\ell_j}{2}$. By identifying these two middle subarcs, we obtain a saddle connection joining a one-prong singularity to a three-prong singularity (see Figure~\ref{fig:saddle_connection}). Since the remaining length of the saddle connection is $\ell_j$, the two corresponding components of $V(q)$ can be glued together along their boundaries. 

    \begin{figure}[!htpb]
        \centering
    \fontsize{8pt}{10pt}\selectfont
    \def\svgwidth{1\columnwidth}
\begingroup%
  \makeatletter%
  \providecommand\color[2][]{%
    \errmessage{(Inkscape) Color is used for the text in Inkscape, but the package 'color.sty' is not loaded}%
    \renewcommand\color[2][]{}%
  }%
  \providecommand\transparent[1]{%
    \errmessage{(Inkscape) Transparency is used (non-zero) for the text in Inkscape, but the package 'transparent.sty' is not loaded}%
    \renewcommand\transparent[1]{}%
  }%
  \providecommand\rotatebox[2]{#2}%
  \newcommand*\fsize{\dimexpr\f@size pt\relax}%
  \newcommand*\lineheight[1]{\fontsize{\fsize}{#1\fsize}\selectfont}%
  \ifx\svgwidth\undefined%
    \setlength{\unitlength}{1232.26410633bp}%
    \ifx\svgscale\undefined%
      \relax%
    \else%
      \setlength{\unitlength}{\unitlength * \real{\svgscale}}%
    \fi%
  \else%
    \setlength{\unitlength}{\svgwidth}%
  \fi%
  \global\let\svgwidth\undefined%
  \global\let\svgscale\undefined%
  \makeatother%
  \begin{picture}(1,0.42782013)%
    \lineheight{1}%
    \setlength\tabcolsep{0pt}%
    \put(0,0){\includegraphics[width=\unitlength,page=1]{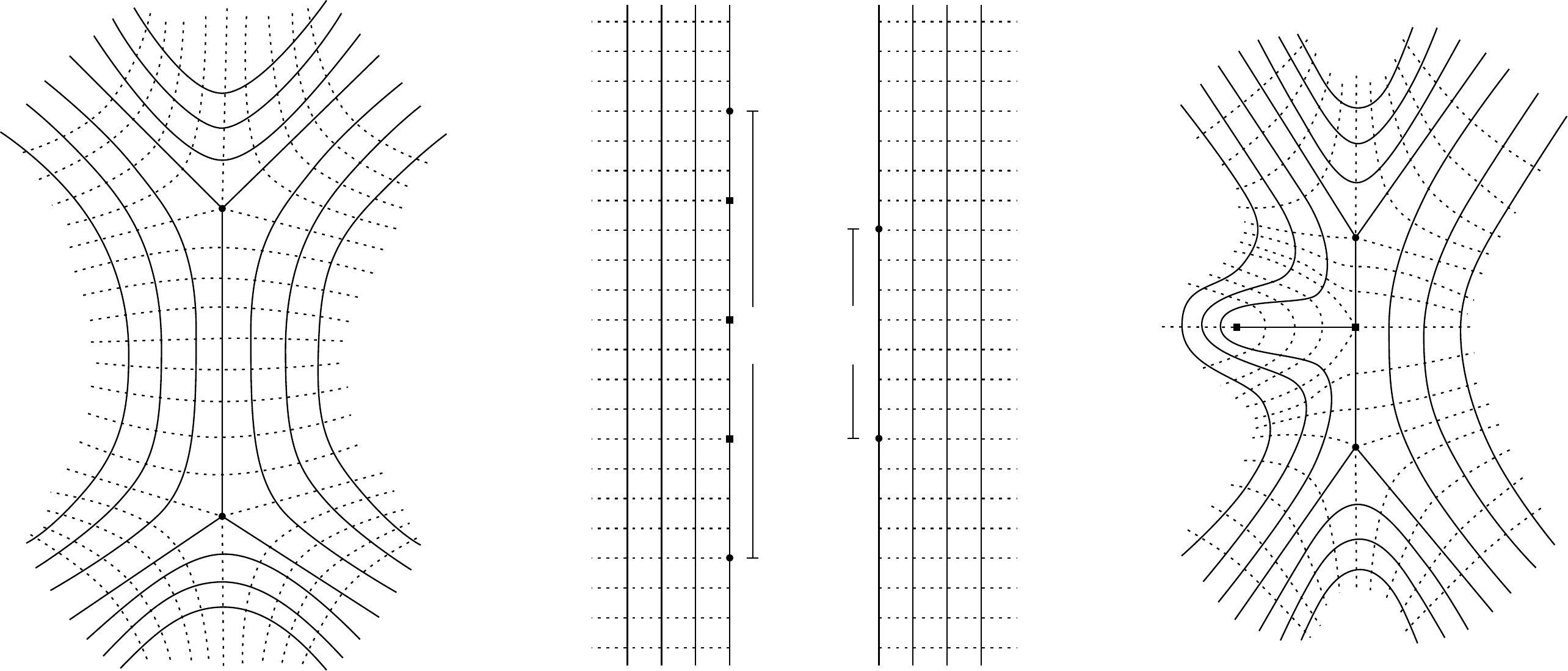}}%
    \put(0.47209055,0.20849207){\color[rgb]{0,0,0}\makebox(0,0)[lt]{\lineheight{0}\smash{\begin{tabular}[t]{l}$\ell_i$\end{tabular}}}}%
    \put(0.53651329,0.20771593){\color[rgb]{0,0,0}\makebox(0,0)[lt]{\lineheight{0}\smash{\begin{tabular}[t]{l}$\ell_j$\end{tabular}}}}%
  \end{picture}%
\endgroup%

        \caption{After rescaling the leaves, a saddle connection may have two different lengths on the boundaries of two adjacent components of $V(q)$. These components are glued together by introducing an additional saddle connection (right).}
        \label{fig:saddle_connection}
    \end{figure}

    After rescaling each leaf of $V(q)$ by the corresponding factor, we glue together all cylinders and minimal domains of $V(q)$ along their boundaries, introducing saddle connections when necessary. In this way, we obtain a measured foliation $V'$ from $V(q)$. It is clear that $V'$ is Whitehead equivalent to $V(q)$, since $V'$ is obtained from $V(q)$ by a sequence of Whitehead moves. In addition, we construct a measured foliation $F'$ by gluing the leaves of $H(q)$, restricted to the cylinders and minimal domains of $V(q)$, along their boundaries. The transverse measure of $F'$ is determined by the lengths of arcs on the leaves of $V'$. Consequently, we obtain a measured foliation $F'$ from $H(q)$ satisfying that 
    $$
    a_ji(G_j,F')=c_ji(G_j,H(q))
    $$
    for each $j=1,\cdots,n$. Then, 
    $$
    \sup_{F'\in\mathcal{MF}}\frac{i(F,F')^2}{\sum_{j=1}^n\frac{a_ji(G_j,F')^2}{i(G_j,H(q))}}=\sum_{j=1}^n\frac{c_j^2i(G_j,H(q))}{a_j}.
    $$

    By Theorem \ref{limit of e^2text}, we have
    $$
    \lim_{t\to\infty}e^{2t}\mathrm{Ext}_{X_t}(F)=\sum_{j=1}^n\frac{c_j^2i(G_j,H(q))}{a_j}
    $$
    for any $F=\sum_{j=1}^nc_jG_j$ in $\mathcal{MF}$, where $c_j> 0$.
\end{proof}


\section{The asymptotic behavior of pairs of \teich rays}\label{section4}

The divergence case of pairs of \teich rays is clear by the works of Ivanov \cite{Iva2001}, Lenzhen and Masur \cite{LM2010}. Based on Kerckhoff's formula, we study the limit of \teich distance between two points moving along two bounded \teich rays, respectively.

Given a measured foliation $F_0\in\mathcal{MF}$, the space of measured foliation space $\mathcal{MF}$ can be partitioned into two complementary subspaces based on the intersection properties of measured foliations in $\mathcal{MF}$ with respect to $F_0$. Then, we have
$$
\mathcal{MF}=\mathcal{MF}_0(F_0)\cup\mathcal{MF}_1(F_0),
$$
where
$$
\mathcal{MF}_0(F_0)=\{F\in\mathcal{MF}\mid i(F_0,F)=0\},
$$
and 
$$
\mathcal{MF}_1(F_0)=\{F\in\mathcal{MF}\mid i(F_0,F)\neq 0\}.
$$
If two measured foliations $F,F'\in\mathcal{MF}$ are absolutely continuous, it is evident that 
$$
\mathcal{MF}_0(F)=\mathcal{MF}_0(F') \text{ and } \mathcal{MF}_1(F)=\mathcal{MF}_1(F').
$$

Let $\mathcal{R}_{q,X}(t)$ be a \teich ray determined by a unit-norm holomorphic quadratic differential $q$ on $X$. The vertical foliation of $q$ is $V(q)=\sum_{j=1}^na_jG_j$, where $a_j\geq 0$. We define a function $\mathcal{E}_q:\mathcal{MF}\to\mathbb{R}_{\geq 0}$ associated with the \teich ray, which is given by
$$
\mathcal{E}_q(F)=\left\{\sum_{j=1}^n\frac{a_ji(G_j,F)^2}{i(G_j,H(q))}\right\}^{\frac{1}{2}}
$$
for any $F\in\mathcal{MF}$.

\begin{lemma}\label{inequality of Ext in MF_1}
    Let $\mathcal{R}_{q,X}(t)$ and $\mathcal{R}_{q',Y}(t)$ be two \teich rays induced by unit-norm quadratic differentials $q$ and $q'$, respectively. If the vertical foliations $V(q)=\sum_{j=1}^na_jG_j$ and $V(q')=\sum_{j=1}^nb_jG_j$ are absolutely continuous, where $a_j,b_j>0$, then
    $$
    \lim_{t\to\infty}\frac{\mathrm{Ext}_{Y_t}(F)}{\mathrm{Ext}_{X_t}(F)}=\frac{\mathcal{E}_{q'}(F)^2}{\mathcal{E}_q(F)^2}\leq\max_{1\leq j\leq n}\frac{b_ji(G_j,H(q))}{a_ji(G_j,H(q'))}
    $$
    for any $F\in\mathcal{MF}_1(V(q))$.
\end{lemma}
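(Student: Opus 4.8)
The plan is to deduce both the limit identity and the inequality from Walsh's asymptotic formula (Theorem \ref{Walsh's equality}) together with an elementary bound on a ratio of sums. First I would rewrite
$$
\frac{\mathrm{Ext}_{Y_t}(F)}{\mathrm{Ext}_{X_t}(F)}=\frac{e^{-2t}\mathrm{Ext}_{Y_t}(F)}{e^{-2t}\mathrm{Ext}_{X_t}(F)}
$$
and apply Theorem \ref{Walsh's equality} to each ray separately. Since $V(q)=\sum_j a_jG_j$ and $V(q')=\sum_j b_jG_j$, the numerator tends to $\mathcal{E}_{q'}(F)^2$ and the denominator to $\mathcal{E}_q(F)^2$. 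To pass to the ratio of limits I must check that the denominator limit $\mathcal{E}_q(F)^2$ is strictly positive, and this is exactly where the hypothesis $F\in\mathcal{MF}_1(V(q))$ enters.

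For that step I would use that the supports of the indecomposable components $G_j$ are disjoint, so that $i(V(q),F)=\sum_j a_j i(G_j,F)$ (the same additivity already invoked in the proof of Corollary \ref{limit for similar foliation}). Hence $i(V(q),F)\neq 0$ forces $i(G_{j_0},F)>0$ for some $j_0$; combined with $a_{j_0}>0$ and $i(G_{j_0},H(q))>0$, the corresponding summand of $\mathcal{E}_q(F)^2$ is positive, so $\mathcal{E}_q(F)>0$. This secures that we are dividing by a nonzero limit and yields the first equality.

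For the inequality I would set $u_j=a_j i(G_j,F)^2/i(G_j,H(q))$ and $v_j=b_j i(G_j,F)^2/i(G_j,H(q'))$, so that $\mathcal{E}_q(F)^2=\sum_j u_j$ and $\mathcal{E}_{q'}(F)^2=\sum_j v_j$, with all $u_j,v_j\geq 0$. For each index with $i(G_j,F)>0$ one has $v_j/u_j=b_j i(G_j,H(q))/(a_j i(G_j,H(q')))$, while indices with $i(G_j,F)=0$ give $u_j=v_j=0$ (as $a_j,b_j>0$) and may be discarded. The mediant inequality $\sum_j v_j\leq\big(\max_j v_j/u_j\big)\sum_j u_j$ for nonnegative terms with $\sum_j u_j>0$ then gives
$$
\frac{\mathcal{E}_{q'}(F)^2}{\mathcal{E}_q(F)^2}=\frac{\sum_j v_j}{\sum_j u_j}\leq\max_{j:\,i(G_j,F)>0}\frac{b_j i(G_j,H(q))}{a_j i(G_j,H(q'))}\leq\max_{1\leq j\leq n}\frac{b_j i(G_j,H(q))}{a_j i(G_j,H(q'))}.
$$

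The only genuinely delicate point is the positivity of $\mathcal{E}_q(F)$; everything else is Walsh's theorem and a one-line algebraic estimate. I expect the main obstacle to be the bookkeeping that converts the hypothesis $F\in\mathcal{MF}_1(V(q))$ into nonvanishing of the denominator, namely recording carefully why $i(G_j,H(q))>0$ for every $j$ (the horizontal foliation meets each indecomposable vertical component transversally, which is implicit in the very definition of $\mathcal{E}_q$) and why the additivity $i(V(q),F)=\sum_j a_j i(G_j,F)$ holds (disjointness of the supports of the components). With these facts in hand the argument is otherwise routine.
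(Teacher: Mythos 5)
Your proposal is correct and follows essentially the same route as the paper: apply Walsh's limit formula (Theorem \ref{Walsh's equality}) to numerator and denominator separately, then bound the ratio $\sum_j v_j/\sum_j u_j$ by the maximum of the termwise ratios (your mediant inequality is exactly the paper's cross-multiplication after relabeling so that the first index attains the maximum). The only difference is that you make explicit the positivity of $\mathcal{E}_q(F)$ for $F\in\mathcal{MF}_1(V(q))$ via the additivity $i(V(q),F)=\sum_j a_j i(G_j,F)$ and $i(G_j,H(q))>0$, a point the paper leaves implicit under its convention on well-defined quotients; this is a sound and slightly more careful rendering of the same argument.
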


\begin{proof}
    By Theorem \ref{Walsh's equality}, for any $F\in\mathcal{MF}_1(V(q))$, there is 
    $$
    \lim_{t\to\infty}\frac{\mathrm{Ext}_{Y_t}(F)}{\mathrm{Ext}_{X_t}(F)}=\lim_{t\to\infty}\frac{e^{-2t}\mathrm{Ext}_{Y_t}(F)}{e^{-2t}\mathrm{Ext}_{X_t}(F)}=\frac{\sum_{j=1}^n\frac{b_ji(G_j,F)^2}{i(G_j,H(q'))}}{\sum_{j=1}^n\frac{a_ji(G_j,F)^2}{i(G_j,H(q))}}=\frac{\mathcal{E}_{q'}(F)^2}{\mathcal{E}_q(F)^2}.
    $$
    By relabeling the indices, we suppose that for all $j=1,\cdots,n,$
    $$
    \frac{\frac{b_j}{i(G_j,H(q'))}}{\frac{a_j}{i(G_j,H(q))}}\leq\frac{\frac{b_1}{i(G_1,H(q'))}}{\frac{a_1}{i(G_1,H(q))}}.
    $$
    Therefore,
    $$
    \frac{a_1}{i(G_1,H(q))}\sum_{j=1}^n\frac{b_ji(G_j,F)^2}{i(G_j,H(q'))}\leq\frac{b_1}{i(G_1,H(q'))}\sum_{j=1}^n\frac{a_ji(G_j,F)^2}{i(G_j,H(q))}.
    $$
    Then,
    $$
    \frac{\mathcal{E}_{q'}(F)^2}{\mathcal{E}_q(F)^2}=\frac{\sum_{j=1}^n\frac{b_ji(G_j,F)^2}{i(G_j,H(q'))}}{\sum_{j=1}^n\frac{a_ji(G_j,F)^2}{i(G_j,H(q))}}\leq\frac{\frac{b_1}{i(G_1,H(q'))}}{\frac{a_1}{i(G_1,H(q))}}=\max_{1\leq j\leq n}\frac{b_ji(G_j,H(q))}{a_ji(G_j,H(q'))}.
    $$
\end{proof}

Indeed, the inequality in Lemma \ref{inequality of Ext in MF_1} has been shown in \cite{Wal2019}. Since this inequality is crucial for our discussion, we state it and provide a proof here. Furthermore, Walsh established the following equality.

\begin{lemma}[{\cite[Lemma~29]{Wal2019}}]\label{maximum ratio of E_q}
    Let $q$ and $q'$ be two unit-norm holomorphic quadratic differentials on $X,Y\in\mathcal{T}(S)$, respectively. The vertical foliation of $q$ is $V(q)=\sum_{j=1}^na_jG_j$, where $a_j> 0$. If the vertical foliation of $q'$ can be expressed as $V(q')=\sum_{j=1}^nb_jG_j$, where $b_j\geq 0$, then 
    $$
    \sup_{F\in\mathcal{MF}_1(V(q))}\frac{\mathcal{E}_{q'}(F)^2}{\mathcal{E}_q(F)^2}=\max_{1\leq j\leq n}\frac{b_ji(G_j,H(q))}{a_ji(G_j,H(q'))}.
    $$
    Otherwise, the supremum is $+\infty$.
\end{lemma}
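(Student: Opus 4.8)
The plan is to establish the two cases separately, and to recognize the finite value as an attained maximum.

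Assume first that $V(q')=\sum_{j=1}^{n}b_{j}G_{j}$ over the same components $G_{j}$ as $V(q)$, with $b_{j}\ge 0$. Then $\mathcal{E}_{q}$ and $\mathcal{E}_{q'}$ are written over a common set of components, and the computation in the proof of Lemma \ref{inequality of Ext in MF_1} goes through with $b_{j}\ge 0$ in place of $b_{j}>0$: after relabelling so that $\tfrac{b_{j}/i(G_{j},H(q'))}{a_{j}/i(G_{j},H(q))}$ is largest at $j=1$, one obtains $\mathcal{E}_{q'}(F)^{2}/\mathcal{E}_{q}(F)^{2}\le\max_{1\le j\le n}\tfrac{b_{j}i(G_{j},H(q))}{a_{j}i(G_{j},H(q'))}$ for all $F\in\mathcal{MF}_{1}(V(q))$, where, by the conventions adopted above, components with $b_{j}=0$ are absent from $\mathcal{E}_{q'}$ and those with $b_{j}>0$ automatically have $i(G_{j},H(q'))>0$. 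This is the upper bound, and it remains to show that it is sharp.

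To attain the supremum I would exhibit an explicit witness. Let $j_{0}$ realize the maximum; since its value is positive and finite we have $b_{j_{0}}>0$ and $i(G_{j_{0}},H(q'))>0$. Let $\Sigma_{j_{0}}$ be the subsurface carrying $G_{j_{0}}$ and set $F=H(q)|_{\Sigma_{j_{0}}}$, the horizontal foliation of $q$ restricted to $\Sigma_{j_{0}}$. As the components of $V(q)$ have pairwise disjoint support, $i(G_{j},F)=0$ for $j\ne j_{0}$ while $i(G_{j_{0}},F)=i(G_{j_{0}},H(q))>0$, so $F\in\mathcal{MF}_{1}(V(q))$ and a direct substitution gives $\mathcal{E}_{q'}(F)^{2}/\mathcal{E}_{q}(F)^{2}=\tfrac{b_{j_{0}}i(G_{j_{0}},H(q))}{a_{j_{0}}i(G_{j_{0}},H(q'))}$. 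This meets the upper bound, so the supremum equals, and is attained at, $\max_{1\le j\le n}\tfrac{b_{j}i(G_{j},H(q))}{a_{j}i(G_{j},H(q'))}$.

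For the remaining case, where $V(q')$ is not of the form $\sum_{j}b_{j}G_{j}$, the plan is to produce one foliation $F_{\infty}\in\mathcal{MF}$ with $i(V(q),F_{\infty})=0$ yet $i(V(q'),F_{\infty})>0$, so that $\mathcal{E}_{q}(F_{\infty})=0<\mathcal{E}_{q'}(F_{\infty})$. Because $\mathcal{MF}_{0}(V(q))=\{\,F:\mathcal{E}_{q}(F)=0\,\}$ has empty interior, $F_{\infty}$ is a limit of foliations $F_{m}\in\mathcal{MF}_{1}(V(q))$; continuity of the intersection number, hence of $\mathcal{E}_{q}$ and $\mathcal{E}_{q'}$, yields $\mathcal{E}_{q}(F_{m})\to 0^{+}$ and $\mathcal{E}_{q'}(F_{m})\to\mathcal{E}_{q'}(F_{\infty})>0$, whence the ratio tends to $+\infty$ and the supremum is infinite. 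When $i(V(q),V(q'))>0$ one may simply take $F_{\infty}=V(q)$, since $i(V(q),V(q))=0$ while $i(V(q'),V(q))=\sum_{k}b'_{k}i(G'_{k},V(q))>0$ forces some $i(G'_{k},V(q))>0$. When $i(V(q),V(q'))=0$, the failure of $V(q')$ to be carried by the $G_{j}$ must come from a component $G'_{k_{0}}$ of $V(q')$ whose underlying topological foliation is that of no component of $V(q)$; a transversal supported in the subsurface of $G'_{k_{0}}$ and disjoint from the support of $V(q)$ then serves as $F_{\infty}$.

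The main obstacle is this last construction: converting the hypothesis that $V(q')\ne\sum_{j}b_{j}G_{j}$ into a concrete subsurface that $V(q')$ occupies but $V(q)$ does not, and hence into a separating foliation. I would organize it around the observation that the inclusion $\mathcal{MF}_{0}(V(q))\subseteq\mathcal{MF}_{0}(V(q'))$ already forces $i(V(q),V(q'))=0$ (test it on $F=V(q)$), so the only delicate situation is the non-crossing one, which is resolved by the structure theory of measured foliations, namely the disjoint supports of distinct components together with the fact that topological inequivalence leaves a region of $V(q')$ uncarried by $V(q)$. Once $F_{\infty}$ is in hand, the density of $\mathcal{MF}_{1}(V(q))$ and the continuity of the intersection pairing make the limiting step routine.
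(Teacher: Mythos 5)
A preliminary remark: the paper itself does not prove this lemma --- it is quoted from Walsh \cite{Wal2019} --- so your argument is being measured against the standard one. Your first case is essentially right: the upper bound is the same mediant estimate proved in Lemma \ref{inequality of Ext in MF_1}, which indeed only needs $b_j\geq 0$, and your witness $F=H(q)|_{\Sigma_{j_0}}$ is exactly the construction this paper invokes in the proof of Corollary \ref{limit for similar foliation} (rescale the transverse measure of $H(q)$ by $1$ on the component carrying $G_{j_0}$ and by $0$ elsewhere), so modulo the unproved but standard fact that this componentwise rescaling produces a measured foliation class with $i(G_{j_0},F)=i(G_{j_0},H(q))$ and $i(G_j,F)=0$ otherwise, that half is fine.

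The genuine gap is in the ``otherwise'' branch, and it is not a technicality: your dichotomy is false. You claim that if $i(V(q),V(q'))=0$ but $V(q')\neq\sum_j b_jG_j$, then some component $G'_{k_0}$ of $V(q')$ has an underlying topological foliation shared by no component of $V(q)$, so that a transversal to $G'_{k_0}$ can be chosen disjoint from the support of $V(q)$. This overlooks precisely the case this paper cares most about (see \S\ref{measured foliations} and the Lenzhen--Masur discussion in the introduction): \emph{topologically equivalent but not absolutely continuous} foliations. Take a minimal domain carrying two mutually singular ergodic measures $\mu_1,\mu_2$, with $V(q)=(\mathcal{F},\mu_1)$ (so $n=1$, $G_1=(\mathcal{F},\mu_1)$) and $V(q')=(\mathcal{F},\mu_2)$. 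Then $V(q')$ is not of the form $b_1G_1$, yet its unique component has the \emph{same} underlying foliation and the same (filling) support as $G_1$, so no transversal avoids the support of $V(q)$. Worse, your requisite $F_\infty$ does not exist at all in this case: if $i(G_1,F)=0$ then, $G_1$ being minimal, $F$ either misses the domain or is topologically equivalent to $\mathcal{F}$ there, and either way $i((\mathcal{F},\mu_2),F)=0$ as well; that is, $\mathcal{MF}_0(V(q))=\mathcal{MF}_0(V(q'))$ and $\mathcal{E}_{q'}$ vanishes wherever $\mathcal{E}_q$ does, so no limit of your type can force the ratio to blow up. The supremum is still $+\infty$, but by a mechanism your proof never reaches: mutual singularity of $\mu_1$ and $\mu_2$ yields simple closed curves $\gamma_m$ shadowing $\mu_2$-generic leaf segments with $i(\mu_1,\gamma_m)/i(\mu_2,\gamma_m)\to 0$, whence $\mathcal{E}_{q'}(\gamma_m)^2/\mathcal{E}_q(\gamma_m)^2\to+\infty$ with numerator and denominator both positive. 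As written, your argument establishes the ``otherwise'' clause only when $V(q')$ crosses $V(q)$ or has a component supported off the support of $V(q)$; the ergodic-theoretic case must be handled separately, and it is the heart of Walsh's lemma.
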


\begin{theorem}\label{equality of Ext in MF_1}
    Let $\mathcal{R}_{q,X}(t)$ and $\mathcal{R}_{q',Y}(t)$ be two \teich rays induced by unit-norm quadratic differentials $q$ and $q'$, respectively. If the vertical foliations $V(q)=\sum_{j=1}^na_jG_j$ and $V(q')=\sum_{j=1}^nb_jG_j$ are absolutely continuous, where $a_j,b_j>0$, then
    $$
    \lim_{t\to\infty}\sup_{F\in\mathcal{MF}_1(V(q))}\frac{\mathrm{Ext}_{Y_t}(F)}{\mathrm{Ext}_{X_t}(F)}=\max_{1\leq j\leq n}\frac{b_ji(G_j,H(q))}{a_ji(G_j,H(q'))}.
    $$
\end{theorem}

\begin{proof}
    By Theorem \ref{Walsh's equality} and Lemma \ref{maximum ratio of E_q}, we have
    \begin{equation}\label{limtinf of Ext in MF_1}
        \begin{aligned}
            \liminf_{t\to\infty}\sup_{F\in\mathcal{MF}_1(V(q))}\frac{\mathrm{Ext}_{Y_t}(F)}{\mathrm{Ext}_{X_t}(F)} 
            &\geq\sup_{F\in\mathcal{MF}_1(V(q))}\liminf_{t\to\infty}\frac{e^{-2t}\mathrm{Ext}_{Y_t}(F)}{e^{-2t}\mathrm{Ext}_{X_t}(F)} \\
            &=\sup_{F\in\mathcal{MF}_1(V(q))}\frac{\mathcal{E}_{q'}(F)^2}{\mathcal{E}_q(F)^2} \\
            &=\max_{1\leq j\leq n}\frac{b_ji(G_j,H(q))}{a_ji(G_j,H(q'))}.
        \end{aligned}
    \end{equation}

    By contradiction, we assume that
    $$
    \limsup_{t\to\infty}\sup_{F\in\mathcal{MF}_1(V(q))}\frac{\mathrm{Ext}_{Y_t}(F)}{\mathrm{Ext}_{X_t}(F)}>\max_{1\leq j\leq n}\frac{b_ji(G_j,H(q))}{a_ji(G_j,H(q'))}.
    $$
    For simplicity, let
    $$
    E=\limsup_{t\to\infty}\sup_{F\in\mathcal{MF}_1(V(q))}\frac{\mathrm{Ext}_{Y_t}(F)}{\mathrm{Ext}_{X_t}(F)} \text{ and } M=\max_{1\leq j\leq n}\frac{b_ji(G_j,H(q))}{a_ji(G_j,H(q'))}.
    $$
    There exists a subsequence $\{t_k\}_{k=1}^\infty$ such that 
    $$
    \lim_{k\to\infty}\sup_{F\in\mathcal{MF}_1(V(q))}\frac{\mathrm{Ext}_{Y_{t_k}}(F)}{\mathrm{Ext}_{X_{t_k}}(F)}=E>M+\frac{E-M}{3}.
    $$
    Then, there is a $N_0$ such that for any $k>N_0$, we have 
    $$
    \sup_{F\in\mathcal{MF}_1(V(q))}\frac{\mathrm{Ext}_{Y_{t_k}}(F)}{\mathrm{Ext}_{X_{t_k}}(F)}>M+\frac{E-M}{3}.
    $$
    For any $k>N_0$, there exists a $F_k\in\mathcal{MF}_1(V(q))$ such that
    \begin{equation}\label{contradiction 1 for Ext in MF_1}
        \frac{\mathrm{Ext}_{Y_{t_k}}(F_k)}{\mathrm{Ext}_{X_{t_k}}(F_k)}>\sup_{F\in\mathcal{MF}_1(V(q))}\frac{\mathrm{Ext}_{Y_{t_k}}(F)}{\mathrm{Ext}_{X_{t_k}}(F)}-\frac{E-M}{6}>M+\frac{E-M}{6}.
    \end{equation}
    By Lemma \ref{inequality of Ext in MF_1}, for any $F\in\mathcal{MF}_1(V(q))$, we have
    $$
    \lim_{k\to\infty}\frac{\mathrm{Ext}_{Y_{t_k}}(F)}{\mathrm{Ext}_{X_{t_k}}(F)}\leq M.
    $$
    Then, for $\varepsilon=\frac{E-M}{6}$, there is a $N_1>N_0>0$ such that for any $k>N_1$, 
    $$
    \frac{\mathrm{Ext}_{Y_{t_k}}(F)}{\mathrm{Ext}_{X_{t_k}}(F)}<M+\varepsilon=M+\frac{E-M}{6}.
    $$
    For each $k>N_1$, we choose the same $F_k\in\mathcal{MF}_1(V(q))$ as in \eqref{contradiction 1 for Ext in MF_1}. Therefore, we have
    \begin{equation}\label{contradiction 2 for Ext in MF_1}
        \frac{\mathrm{Ext}_{Y_{t_k}}(F_k)}{\mathrm{Ext}_{X_{t_k}}(F_k)}<M+\frac{E-M}{6}
    \end{equation} 
    for any $k>N_1$. We derive a contradiction between \eqref{contradiction 1 for Ext in MF_1} and \eqref{contradiction 2 for Ext in MF_1}. Then, there is 
    \begin{equation}\label{limtsup of Ext in MF_1}
        \limsup_{t\to\infty}\sup_{F\in\mathcal{MF}_1(V(q))}\frac{\mathrm{Ext}_{Y_t}(F)}{\mathrm{Ext}_{X_t}(F)}\leq\max_{1\leq j\leq n}\frac{b_ji(G_j,H(q))}{a_ji(G_j,H(q'))}.
    \end{equation}
    By combining \eqref{limtinf of Ext in MF_1} and \eqref{limtsup of Ext in MF_1}, we can obtain that 
    $$
    \lim_{t\to\infty}\sup_{F\in\mathcal{MF}_1(V(q))}\frac{\mathrm{Ext}_{Y_t}(F)}{\mathrm{Ext}_{X_t}(F)}=\max_{1\leq j\leq n}\frac{b_ji(G_j,H(q))}{a_ji(G_j,H(q'))}.
    $$
\end{proof}

Subsequently, we consider the ratio of extremal lengths along a pair of \teich rays $\mathcal{R}_{q,X}(t)$ and $\mathcal{R}_{q',Y}(t)$, for measured foliations in $\mathcal{MF}_0(V(q))$.

\begin{lemma}\label{sup in MF'_0}
    Let $q$ and $q'$ be two unit-norm holomorphic quadratic differentials on $X, Y\in\mathcal{T}(S)$, respectively. If the vertical foliations $V(q)=\sum_{j=1}^na_jG_j$ and $V(q')=\sum_{j=1}^nb_jG_j$ are absolutely continuous, where $a_j,b_j>0$, then
    $$
    \sup_{F\in\mathcal{MF}'_0(V(q))}\frac{\sum_{j=1}^n\frac{c_j^2i(G_j,H(q'))}{b_j}}{\sum_{j=1}^n\frac{c_j^2i(G_j,H(q))}{a_j}}=\max_{1\leq j\leq n}\frac{a_ji(G_j,H(q'))}{b_ji(G_j,H(q))},
    $$
    where $\mathcal{MF}'_0(V(q))=\{F\in\mathcal{MF}\mid F=\sum_{j=1}^nc_jG_j,\;c_j> 0\}$.
\end{lemma}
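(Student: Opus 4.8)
The plan is to recognize that, once $F=\sum_{j=1}^n c_jG_j$ is substituted, the quantity inside the supremum becomes a ratio of two \emph{positive} linear combinations of the variables $x_j:=c_j^2\ge 0$, and such a ratio is a weighted average of the termwise ratios. Concretely, I would set
\[
u_j=\frac{i(G_j,H(q'))}{b_j},\qquad v_j=\frac{i(G_j,H(q))}{a_j},
\]
so that the supremand is $\bigl(\sum_j u_jx_j\bigr)/\bigl(\sum_j v_jx_j\bigr)$ and the claimed value is $\max_{1\le j\le n} u_j/v_j$. The first thing to check is that every $u_j$ and $v_j$ is strictly positive. This holds because $a_j,b_j>0$ by absolute continuity and because each component $G_j$ occupies a subsurface of positive area for both flat metrics $|q|$ and $|q'|$; since this area equals $a_ji(G_j,H(q))$ (resp. $b_ji(G_j,H(q'))$), we obtain $i(G_j,H(q))>0$ and $i(G_j,H(q'))>0$. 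With all denominators positive and not all $x_j$ zero, the terms $v_jx_j$ are nonnegative and do not all vanish, so the ratio is legitimately defined.

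For the upper bound I would factor the maximum out of the numerator: writing $M:=\max_{1\le j\le n} u_j/v_j$, one has $u_j\le M v_j$ for every $j$, hence
\[
\sum_{j=1}^n u_jx_j\le M\sum_{j=1}^n v_jx_j
\]
for all $x_j\ge 0$, which gives $\bigl(\sum_j u_jx_j\bigr)/\bigl(\sum_j v_jx_j\bigr)\le M$ whenever the denominator is positive. Thus the supremum is at most $M=\max_{1\le j\le n}\frac{a_ji(G_j,H(q'))}{b_ji(G_j,H(q))}$.

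For the reverse inequality I would exhibit a maximizer: let $j_0$ be an index achieving $M=u_{j_0}/v_{j_0}$, and take $F=G_{j_0}$, i.e. $c_{j_0}=1$ and $c_j=0$ for $j\ne j_0$. This is a genuine element of $\mathcal{MF}'_0(V(q))$, and for it both the numerator and the denominator collapse to single terms, so the ratio equals exactly $u_{j_0}/v_{j_0}=M$. Combining the two inequalities yields the asserted equality.

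There is no serious obstacle here: the argument is the elementary mediant/weighted-average bound, and the only points requiring care are (i) confirming that all the coefficients $i(G_j,H(q))$, $i(G_j,H(q'))$, $a_j$, $b_j$ are strictly positive so that the ratio is defined over the whole index set, and (ii) checking that the maximizing configuration $F=G_{j_0}$ lies in the admissible set $\mathcal{MF}'_0(V(q))$. Both are immediate from the standing absolute-continuity hypotheses, and the two sums are exactly the limits furnished by Corollary \ref{limit for similar foliation} applied to $q$ and to $q'$.
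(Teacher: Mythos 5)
Your proof is correct and takes essentially the same approach as the paper: the upper bound is the identical weighted-average (mediant) estimate, obtained by factoring the maximal termwise ratio $\max_j u_j/v_j$ out of the numerator after relabeling. The only difference is the endgame: you attain the supremum exactly at the vertex $F=G_{j_0}$ (legitimate, since the definition of $\mathcal{MF}'_0(V(q))$ explicitly allows $c_j=0$ and a single weighted component is a genuine element of $\mathcal{MF}$), whereas the paper approaches it by taking $c_1>0$, $c_j=\varepsilon$ for $j\geq 2$ and letting $\varepsilon\to 0$ --- a slightly more cautious variant of the same step that yields the same conclusion.
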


\begin{proof}
    By relabeling the indices, we can suppose that for any $j=1,\cdots,n$,
    $$
    \frac{\frac{i(G_j,H(q'))}{b_j}}{\frac{i(G_j,H(q))}{a_j}}\leq\frac{\frac{i(G_1,H(q'))}{b_1}}{\frac{i(G_1,H(q))}{a_1}}.
    $$
    Then, for any $F\in\mathcal{MF}'_0(V(q))$, there is 
    $$
    \frac{i(G_1,H(q))}{a_1}\sum_{j=1}^n\frac{c_j^2i(G_j,H(q'))}{b_j}\leq\frac{i(G_1,H(q'))}{b_1}\sum_{j=1}^n\frac{c_j^2i(G_j,H(q))}{a_j}.
    $$
    Therefore, 
    $$
    \frac{\sum_{j=1}^n\frac{c_j^2i(G_j,H(q'))}{b_j}}{\sum_{j=1}^n\frac{c_j^2i(G_j,H(q))}{a_j}}\leq\frac{\frac{i(G_1,H(q'))}{b_1}}{\frac{i(G_1,H(q))}{a_1}}=\max_{1\leq j\leq n}\frac{a_ji(G_j,H(q'))}{b_ji(G_j,H(q))}.
    $$
    Moreover, we can choose a measured foliation $F\in\mathcal{MF}'_0(V(q))$ such that $c_1>0$ and $c_j=\varepsilon$ for all $j=2,\cdots,n$. Then, there is  
    $$
    \frac{\frac{c_1^2i(G_1,H(q'))}{b_1}+\varepsilon^2\sum_{j=2}^n\frac{i(G_j,H(q'))}{b_j}}{\frac{c_1^2i(G_1,H(q))}{a_1}+\varepsilon^2\sum_{j=2}^n\frac{i(G_j,H(q))}{a_j}}\leq\frac{\frac{i(G_1,H(q'))}{b_1}}{\frac{i(G_1,H(q))}{a_1}}=\max_{1\leq j\leq n}\frac{a_ji(G_j,H(q'))}{b_ji(G_j,H(q))}.
    $$
    Since the $\varepsilon$ can be chosen arbitrarily small, we can conclude that
    $$
    \sup_{F\in\mathcal{MF}'_0(V(q))}\frac{\sum_{j=1}^n\frac{c_j^2i(G_j,H(q'))}{b_j}}{\sum_{j=1}^n\frac{c_j^2i(G_j,H(q))}{a_j}}=\max_{1\leq j\leq n}\frac{a_ji(G_j,H(q'))}{b_ji(G_j,H(q))}.
    $$
\end{proof}

\begin{theorem}\label{equality of Ext in MF_0}
    Let $\mathcal{R}_{q,X}(t)$ and $\mathcal{R}_{q',Y}(t)$ be two \teich rays induced by unit-norm quadratic differentials $q$ and $q'$, respectively. If the vertical foliations $V(q)=\sum_{j=1}^na_jG_j$ and $V(q')=\sum_{j=1}^nb_jG_j$ are absolutely continuous, where $a_j,b_j>0$, then
    $$
    \lim_{t\to\infty}\sup_{F\in\mathcal{MF}_0(V(q))}\frac{\mathrm{Ext}_{Y_t}(F)}{\mathrm{Ext}_{X_t}(F)}=\max_{1\leq j\leq n}\frac{a_ji(G_j,H(q'))}{b_ji(G_j,H(q))}.
    $$
\end{theorem}

\begin{proof}
    By Lemma \ref{inequality}, for any $F,F'\in\mathcal{MF}$, there are 
    $$
    \mathrm{Ext}_{X_t}(F)\mathrm{Ext}_{X_t}(F')\geq i(F,F')^2.
    $$
    Then, we have
    $$
    e^{2t}\mathrm{Ext}_{X_t}(F)\geq\frac{i(F,F')^2}{e^{-2t}\mathrm{Ext}_{X_t}(F')}.
    $$
    Moreover, for any $F\in\mathcal{MF}$, there exists a $H'_t\in\mathcal{MF}$ such that 
    $$
    \mathrm{Ext}_{Y_t}(F)\mathrm{Ext}_{Y_t}(H'_t)=i(F,H'_t)^2.
    $$
    Then, there is
    $$
    e^{2t}\mathrm{Ext}_{Y_t}(F)=\frac{i(F,H'_t)^2}{e^{-2t}\mathrm{Ext}_{Y_t}(H'_t)}.
    $$

    Therefore, we have
    \begin{equation}\label{limtsup-1 of Ext in MF_0}
        \begin{aligned}
            \limsup_{t\to\infty}\sup_{F\in\mathcal{MF}_0(V(q))}\frac{\mathrm{Ext}_{Y_t}(F)}{\mathrm{Ext}_{X_t}(F)}
            &=\limsup_{t\to\infty}\sup_{F\in\mathcal{MF}_0(V(q))}\frac{e^{2t}\mathrm{Ext}_{Y_t}(F)}{e^{2t}\mathrm{Ext}_{X_t}(F)} \\
            &=\limsup_{t\to\infty}\sup_{F\in\mathcal{MF}_0(V(q))}\frac{\frac{i(F,H'_t)^2}{e^{-2t}\mathrm{Ext}_{Y_t}(H'_t)}}{e^{2t}\mathrm{Ext}_{X_t}(F)} \\
            &\leq\limsup_{t\to\infty}\sup_{F\in\mathcal{MF}_0(V(q))}\frac{\frac{i(F,H'_t)^2}{e^{-2t}\mathrm{Ext}_{Y_t}(H'_t)}}{\frac{i(F,H'_t)^2}{e^{-2t}\mathrm{Ext}_{X_t}(H'_t)}} \\
            &=\limsup_{t\to\infty}\frac{e^{-2t}\mathrm{Ext}_{X_t}(H'_t)}{e^{-2t}\mathrm{Ext}_{Y_t}(H'_t)} \\
            &\leq\limsup_{t\to\infty}\sup_{F\in\mathcal{MF}_1(V(q))}\frac{\mathrm{Ext}_{X_t}(F)}{\mathrm{Ext}_{Y_t}(F)}.
        \end{aligned}
    \end{equation}

    By contradiction, we assume that
    $$
    \limsup_{t\to\infty}\sup_{F\in\mathcal{MF}_1(V(q))}\frac{\mathrm{Ext}_{X_t}(F)}{\mathrm{Ext}_{Y_t}(F)}>\max_{1\leq j\leq n}\frac{a_ji(G_j,H(q'))}{b_ji(G_j,H(q))}.
    $$
    For simplicity, let
    $$
    E'=\limsup_{t\to\infty}\sup_{F\in\mathcal{MF}_1(V(q))}\frac{\mathrm{Ext}_{X_t}(F)}{\mathrm{Ext}_{Y_t}(F)} \text{ and } M'=\max_{1\leq j\leq n}\frac{a_ji(G_j,H(q'))}{b_ji(G_j,H(q))}.
    $$
    There exists a subsequence $\{t_k\}_{k=1}^\infty$ such that
    $$
    \lim_{k\to\infty}\sup_{F\in\mathcal{MF}_1(V(q))}\frac{\mathrm{Ext}_{X_{t_k}}(F)}{\mathrm{Ext}_{Y_{t_k}}(F)}=E'>M'+\frac{E'-M'}{3}.
    $$
    Then, there is a $N'_0$ such that for any $k>N'_0$, we have 
    $$
    \sup_{F\in\mathcal{MF}_1(V(q))}\frac{\mathrm{Ext}_{X_{t_k}}(F)}{\mathrm{Ext}_{Y_{t_k}}(F)}>M'+\frac{E'-M'}{3}.
    $$
    For any $k>N'_0$, there exists a $F_k\in\mathcal{MF}_1(V(q))$ such that
    \begin{equation}\label{contradiction 1 for Ext in MF_0}
        \frac{\mathrm{Ext}_{X_{t_k}}(F_k)}{\mathrm{Ext}_{Y_{t_k}}(F_k)}>\sup_{F\in\mathcal{MF}_1(V(q))}\frac{\mathrm{Ext}_{X_{t_k}}(F)}{\mathrm{Ext}_{Y_{t_k}}(F)}-\frac{E'-M'}{6}>M'+\frac{E'-M'}{6}.
    \end{equation}
    By Lemma \ref{inequality of Ext in MF_1}, for any $F\in\mathcal{MF}_1(V(q))$, there is 
    $$
    \lim_{t\to\infty}\frac{\mathrm{Ext}_{X_t}(F)}{\mathrm{Ext}_{Y_t}(F)}\leq M'.
    $$
    Then, for the subsequence $\{t_k\}_{k=1}^\infty$, we still have
    $$
    \lim_{k\to\infty}\frac{\mathrm{Ext}_{X_{t_k}}(F)}{\mathrm{Ext}_{Y_{t_k}}(F)}\leq M'.
    $$
    Then, for $\varepsilon=\frac{E'-M'}{6}$, there is a $N'_1>N'_0>0$ such that for any $k>N'_1$, 
    $$
    \frac{\mathrm{Ext}_{X_{t_k}}(F)}{\mathrm{Ext}_{Y_{t_k}}(F)}<M'+\varepsilon=M'+\frac{E'-M'}{6}.
    $$
    For each $k>N'_1$, we choose the same $F_k\in\mathcal{MF}_1(V(q))$ as in \eqref{contradiction 1 for Ext in MF_0}. Therefore, we have
    \begin{equation}\label{contradiction 2 for Ext in MF_0}
        \frac{\mathrm{Ext}_{X_{t_k}}(F_k)}{\mathrm{Ext}_{Y_{t_k}}(F_k)}<M'+\frac{E'-M'}{6}
    \end{equation} 
    for any $k>N'_1$. We derive a contradiction between \eqref{contradiction 1 for Ext in MF_0} and \eqref{contradiction 2 for Ext in MF_0}. Then, there is 
    \begin{equation}\label{limtsup-2 of Ext in MF_0}
        \limsup_{t\to\infty}\sup_{F\in\mathcal{MF}_1(V(q))}\frac{\mathrm{Ext}_{X_t}(F)}{\mathrm{Ext}_{Y_t}(F)}\leq\max_{1\leq j\leq n}\frac{a_ji(G_j,H(q'))}{b_ji(G_j,H(q))}.
    \end{equation}
    By combining \eqref{limtsup-1 of Ext in MF_0} and \eqref{limtsup-2 of Ext in MF_0}, we can obtain that 
    \begin{equation}\label{limtsup of Ext in MF_0}
        \limsup_{t\to\infty}\sup_{F\in\mathcal{MF}_0(V(q))}\frac{\mathrm{Ext}_{Y_t}(F)}{\mathrm{Ext}_{X_t}(F)}\leq\max_{1\leq j\leq n}\frac{a_ji(G_j,H(q'))}{b_ji(G_j,H(q))}.
    \end{equation}

    Moreover, by Corollary \ref{limit for similar foliation} and Lemma \ref{sup in MF'_0}, we have
    \begin{equation}\label{limtinf of Ext in MF_0}
        \begin{aligned}
            \liminf_{t\to\infty}\sup_{F\in\mathcal{MF}_0(V(q))}\frac{\mathrm{Ext}_{Y_t}(F)}{\mathrm{Ext}_{X_t}(F)}
            &\geq\sup_{F\in\mathcal{MF}_0(V(q))}\liminf_{t\to\infty}\frac{\mathrm{Ext}_{Y_t}(F)}{\mathrm{Ext}_{X_t}(F)} \\
            &\geq\sup_{F\in\mathcal{MF}'_0(V(q))}\liminf_{t\to\infty}\frac{e^{2t}\mathrm{Ext}_{Y_t}(F)}{e^{2t}\mathrm{Ext}_{X_t}(F)} \\
            &=\sup_{F\in\mathcal{MF}'_0(V(q))}\frac{\sum_{j=1}^n\frac{c_j^2i(G_j,H(q'))}{b_j}}{\sum_{j=1}^n\frac{c_j^2i(G_j,H(q))}{a_j}} \\
            &=\max_{1\leq j\leq n}\frac{a_ji(G_j,H(q'))}{b_ji(G_j,H(q))}.
        \end{aligned}
    \end{equation}
    By combining \eqref{limtsup of Ext in MF_0} and \eqref{limtinf of Ext in MF_0}, we conclude that 
    $$
    \lim_{t\to\infty}\sup_{F\in\mathcal{MF}_0(V(q))}\frac{\mathrm{Ext}_{Y_t}(F)}{\mathrm{Ext}_{X_t}(F)}=\max_{1\leq j\leq n}\frac{a_ji(G_j,H(q'))}{b_ji(G_j,H(q))}.
    $$
\end{proof}

\begin{proof}[\textbf{Proof of Theorem} \ref{limiting distance}]
    If the vertical foliations $V(q)$ and $V(q')$ are absolutely continuous, then by Theorem \ref{equality of Ext in MF_1}, Theorem \ref{equality of Ext in MF_0} and Kerckhoff's formula, we have
    \begin{align*}
        \lim_{t\to\infty}d_{\mathcal{T}}(X_t,Y_t)
        &=\lim_{t\to\infty}\frac{1}{2}\log\sup_{F\in\mathcal{MF}}\frac{\mathrm{Ext}_{Y_t}(F)}{\mathrm{Ext}_{X_t}(F)} \\
        &=\frac{1}{2}\log\lim_{t\to\infty}\max\left\{\sup_{F\in\mathcal{MF}_0(V(q))}\frac{\mathrm{Ext}_{Y_t}(F)}{\mathrm{Ext}_{X_t}(F)},\sup_{F\in\mathcal{MF}_1(V(q))}\frac{\mathrm{Ext}_{Y_t}(F)}{\mathrm{Ext}_{X_t}(F)}\right\} \\
        &=\frac{1}{2}\log\max\left\{\lim_{t\to\infty}\sup_{F\in\mathcal{MF}_0(V(q))}\frac{\mathrm{Ext}_{Y_t}(F)}{\mathrm{Ext}_{X_t}(F)},\lim_{t\to\infty}\sup_{F\in\mathcal{MF}_1(V(q))}\frac{\mathrm{Ext}_{Y_t}(F)}{\mathrm{Ext}_{X_t}(F)}\right\} \\
        &=\frac{1}{2}\log\max\left\{\max_{1\leq j\leq n}\frac{a_ji(G_j,H(q'))}{b_ji(G_j,H(q))},\max_{1\leq j\leq n}\frac{b_ji(G_j,H(q))}{a_ji(G_j,H(q'))}\right\} \\
        &=\frac{1}{2}\log\max_{1\leq j\leq n}\left\{\frac{a_ji(G_j,H(q'))}{b_ji(G_j,H(q))},\frac{b_ji(G_j,H(q))}{a_ji(G_j,H(q'))}\right\}.
    \end{align*}
    Otherwise, by the results of Ivanov \cite{Iva2001}, Lenzhen and Masur \cite{LM2010}, the \teich distance $d_{\mathcal{T}}(X_t,Y_t)$ tends to infinity as $t\to\infty$.
\end{proof}

\begin{proof}[\textbf{Proof of Corollary} \ref{asymptotic}]
    If the \teich rays $\mathcal{R}_{q,X}(t)$ and $\mathcal{R}_{q',Y}(t)$ are asymptotic, then without loss of generality, we assume that their initial points are appropriately selected such that 
    $$
    \lim_{t\to\infty}d_{\mathcal{T}}(X_t,Y_t)=0.
    $$
    Therefore, by Theorem \ref{limiting distance}, we have 
    $$
    \frac{a_j}{i(G_j,H(q))}=\frac{b_j}{i(G_j,H(q'))},
    $$
    for all $j=1,\cdots,n$. Then $q$ and $q'$ are modularly equivalent.

    Conversely, if $q$ and $q'$ are modularly equivalent, then there exists a constant $C>0$ such that 
    $$
    \frac{a_j}{i(G_j,H(q))}=C\frac{b_j}{i(G_j,H(q'))},
    $$
    for all $j=1,\cdots,n$. Let $\sigma=\frac{1}{2}\log C$. There is 
    $$
    \lim_{t\to\infty}d_{\mathcal{T}}(X_t,Y_{t+\sigma})=\frac{1}{2}\log\max_{1\leq j\leq n}\left\{\frac{a_ji(G_j,H(q'))}{e^{2\sigma}b_ji(G_j,H(q))},\frac{e^{2\sigma}b_ji(G_j,H(q))}{a_ji(G_j,H(q'))}\right\}=0.
    $$
    Then the \teich rays $\mathcal{R}_{q,X}(t)$ and $\mathcal{R}_{q',Y}(t)$ are asymptotic.
\end{proof}


\section{The minimum of the limiting \teich distance}\label{section5}

Since the limiting \teich distance between two points along a pair of \teich rays depends on the initial points of these rays, the minimum of the limiting \teich distance can be attained by shifting the initial points along the corresponding \teich geodesics. This minimum value is related to the detour metric $\delta$ between the two endpoints of the \teich rays on the horofunction boundary of $\mathcal{T}(S)$.

\subsection{The horofunction compactification}

We recall the horofunction compactification of a metric space which is introduced by Gromov in \cite{Gro1981}. Consider a proper geodesic metric space $(M,d)$ and choose a basepoint $b\in M$. A metric space is proper if all of its closed balls are compact, and it is geodesic if any two points can be connected by a geodesic segment. Define a function $\psi_z:M\to\mathbb{R}$ which is given by
$$
\psi_z(x):=d(x,z)-d(b,z),\;\forall x\in M.
$$
Then, this defines a mapping
\begin{equation*}
    \begin{array}{cccc}
        \Psi: & M &\to &C(M) \\
         &z &\mapsto &\psi_z,
    \end{array}
\end{equation*}
where $C(M)$ denotes the space of all continuous functions on $M$, equipped with the topology of uniform convergence on any compact subset of M.

Gromov showed that the mapping $\Psi$ is an embedding and the set $\Psi(M)$ is relatively compact in $C(M)$. The closure $\overline{\Psi(M)}$ is called the horofunction compactification of $M$. The horofunction boundary of $M$ is defined as $\partial_{hor}M:=\overline{\Psi(M)}\setminus\Psi(M)$, and each $\xi\in\partial_{hor}M$ is called a horofunction.

The horofunction boundary of $M$ depends on the choice of basepoint in $M$, and the horofunction boundaries of $M$ from different basepoints are homeomorphic. This homeomorphism is realized by applying additive constant adjustment to each horofunction.

Since the \teich space $\mathcal{T}(S)$ with \teich metric $d_{\mathcal{T}}$ is known to be a proper geodesic metric space, we can consider its horofunction compactification, denoted by $\overline{\mathcal{T}(S)}^{hor}$. The horofunction boundary of $\mathcal{T}(S)$ is denoted by $\partial_{hor}\mathcal{T}(S)$.

The Gardiner-Masur compactification of \teich space is introduced by Gardiner and Masur in \cite{GM1991}. By using extremal lengths of simple closed curves, we can define a mapping
\begin{equation*}
    \begin{array}{cccl}
        \varphi: & \mathcal{T}(S) & \to & \mathbb{R}_{\geq 0}^{\mathcal{S}}  \\
        & X & \mapsto & \left\{\mathrm{Ext}_X(\alpha)^{\frac{1}{2}}\right\}_{\alpha\in\mathcal{S}}. 
    \end{array}
\end{equation*}
Gardiner and Masur proved that the mapping $\Phi:=\pi\circ\phi:\mathcal{T}(S)\to P\mathbb{R}_{\geq 0}^\mathcal{S}$ is an embedding, where $\pi:\mathbb{R}_{\geq 0}^\mathcal{S}\setminus\{0\}\to P\mathbb{R}_{\geq 0}^\mathcal{S}$ is the natural projection, and the closure $\overline{\Phi(\mathcal{T}(S))}$ is compact. The Gardiner-Masur compactification of $\mathcal{T}(S)$, denoted by $\overline{\mathcal{T}(S)}^{GM}$, is defined by the closure $\overline{\Phi(\mathcal{T}(S))}$, and the Gardiner-Masur boundary of $\mathcal{T}(S)$ is defined as $\partial_{GM}\mathcal{T}(S):=\overline{\Phi(\mathcal{T}(S))}\setminus\Phi(\mathcal{T}(S))$.

Liu and Su \cite{LS2014} showed that the horofunction compactification of \teich space is homeomorphic to the Gardiner-Masur compactification, which was also proved independently by Walsh in \cite{Wal2019}. Walsh also showed that the \teich ray $\mathcal{R}_{q,X}(t)$ converges to the projective class of $\mathcal{E}_q(\cdot)$ in $\overline{\mathcal{T}(S)}^{GM}$.

\subsection{The detour metric}

Let $\mathcal{I}$ be an unbounded subset of $\mathbb{R}_{\geq 0}$ containing $0$. An almost-geodesic ray in a metric space $(M,d)$ is defined as a mapping $\gamma:\mathcal{I}\to M$ satisfying that for any $\varepsilon>0$, there exists a $T\geq 0$ such that 
$$
|d(\gamma(0),\gamma(s))+d(\gamma(s),\gamma(t))-t|<\varepsilon,
$$
for all $s,t\in E$ with $t\geq s\geq T$.

Rieffel \cite{Rie2002} proved that each almost-geodesic ray in a metric space $(M,d)$ converges to a point in $\partial_{hor}M$. A horofunction in $\partial_{hor}M$ is called a Busemann point if it is the limit of an almost-geodesic ray. The set of all Busemann points in $\partial_{hor}M$, called the Busemann set of $M$, is denoted by $\partial_B M$.

For any two horofunctions $\xi, \eta\in\partial_{hor}M$, the detour cost is defined by 
$$
H(\xi,\eta):=\sup_{W\ni\xi}\inf_{x\in W}\left(d(b,x)+\eta(x)\right),
$$
where $W$ ranges over all neighborhoods of $\xi$ in the horofunction compactification of $(M,d)$. An equivalent definition is given by
$$
H(\xi,\eta):=\inf_{\gamma}\liminf_{t\to\infty}\left(d(b,\gamma(t))+\eta(\gamma(t))\right),
$$
where the infimum is taken over all paths $\gamma:\mathbb{R}_{\geq 0}\to M$ converging to $\xi$.

For any two Busemann points $\xi, \eta\in\partial_B M$, the detour distance between them is defined as 
$$
\delta(\xi,\eta)=H(\xi,\eta)+H(\eta,\xi).
$$
We refer to \cite{Wal2014} for more details. The detour metric $\delta$ may take infinity, and it is independent of the basepoint in $M$.

Walsh \cite{Wal2019} showed that for any $X\in\mathcal{T}(S)$ and $\xi\in\partial_B\mathcal{T}(S)$, there exists a unique \teich ray starting from $X$ and converging to $\xi$. We denote by $\mathcal{B}_q$ the Busemann point in $\partial_B\mathcal{T}(S)$, which is the limit of a \teich ray $\mathcal{R}_{q,X}(t)$. Walsh also established a formula of detour distance between two Busemann points which are the limits of two \teich rays from the basepoint of $\overline{\mathcal{T}(S)}^{hor}$ (see \cite[Corollary~3]{Wal2019}). Amano \cite{Ama2014} generalized this formula to any two Busemann points associated with distinct \teich rays originating from different satrting points.

\begin{proposition}[{\cite[Proposition~4.14]{Ama2014}}]\label{detour distance}
    Let $\mathcal{R}_{q,X}(t)$ and $\mathcal{R}_{q',Y}(t)$ be two \teich rays. If the vertical foliations $V(q)=\sum_{j=1}^na_jG_j$ and $V(q')=\sum_{j=1}^nb_jG_j$ are absolutely continuous, where $a_j, b_j>0$, then the detour distance between $\mathcal{B}_q$ and $\mathcal{B}_{q'}$ associated with these two rays is 
    $$
    \delta(\mathcal{B}_q,\mathcal{B}_{q'})=\frac{1}{2}\log\max_{1\leq j\leq n}\frac{a_ji(G_j,H(q'))}{b_ji(G_j,H(q))}+\frac{1}{2}\log\max_{1\leq j\leq n}\frac{b_ji(G_j,H(q))}{a_ji(G_j,H(q'))}.
    $$
    If $V(q)$ and $V(q')$ are not absolutely continuous, then $\delta(\mathcal{B}_q,\mathcal{B}_{q'})=+\infty$.
\end{proposition}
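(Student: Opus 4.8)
The plan is to compute the two detour costs $H(\mathcal{B}_q,\mathcal{B}_{q'})$ and $H(\mathcal{B}_{q'},\mathcal{B}_q)$ separately and add them, using the two \teich rays themselves as almost-geodesics. I fix the basepoint $b=X$, so that $X_t=\mathcal{R}_{q,X}(t)$ emanates from $b$ while $Y_s=\mathcal{R}_{q',Y}(s)$ emanates from $Y$; both are geodesic rays, hence almost-geodesics, converging to $\mathcal{B}_q$ and $\mathcal{B}_{q'}$ respectively. The main input is the standard description of the detour cost of a Busemann point (see \cite{Wal2014}): if $\xi$ is the limit of an almost-geodesic $\gamma$, then for every horofunction $\eta$ one has $H(\xi,\eta)=\lim_{t\to\infty}\bigl(d_{\mathcal{T}}(b,\gamma(t))+\eta(\gamma(t))\bigr)$, with the limit existing and independent of $\gamma$. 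I will feed $\gamma=X_t$ into the formula to get $H(\mathcal{B}_q,\mathcal{B}_{q'})$ and $\gamma=Y_s$ to get $H(\mathcal{B}_{q'},\mathcal{B}_q)$.

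Next I would make the two horofunctions explicit. By Walsh's identification of the Gardiner--Masur boundary with the horofunction boundary and his description of the limit of a \teich ray as the projective class of $\mathcal{E}_q$ (\cite{Wal2019}, see also \cite{LS2014}), the horofunction based at $b=X$ is $\mathcal{B}_q(Z)=\tfrac12\log\sup_{F\in\mathcal{MF}}\frac{\mathcal{E}_q(F)^2}{\mathrm{Ext}_Z(F)}$. Here the normalization is already correct: since $q$ has unit norm on $X$, Lemma \ref{Walsh's inequality} at $t=0$ gives $\mathrm{Ext}_X(F)\ge\mathcal{E}_q(F)^2$, while $F=H(q)$ attains equality because $\mathcal{E}_q(H(q))^2=i(V(q),H(q))=\|q\|=1=\mathrm{Ext}_X(H(q))$; hence $\sup_F\mathcal{E}_q(F)^2/\mathrm{Ext}_X(F)=1$ and $\mathcal{B}_q(X)=0$. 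The same description for the second ray gives $\mathcal{B}_{q'}(Z)=\tfrac12\log\sup_F\frac{\mathcal{E}_{q'}(F)^2}{\mathrm{Ext}_Z(F)}-D$, where the additive constant $D:=\tfrac12\log\sup_F\frac{\mathcal{E}_{q'}(F)^2}{\mathrm{Ext}_X(F)}$ is this time nonzero, precisely because $q'$ lives on $Y$ and not on the basepoint $X$.

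Then I would substitute and use Kerckhoff's formula to extract the arclength parameter. Feeding $\gamma=X_t$, the identity $d_{\mathcal{T}}(X,X_t)=t$ combines with $\mathcal{B}_{q'}(X_t)$ to give $t+\mathcal{B}_{q'}(X_t)=\tfrac12\log\sup_F\frac{\mathcal{E}_{q'}(F)^2}{e^{-2t}\mathrm{Ext}_{X_t}(F)}-D$. Since $e^{-2t}\mathrm{Ext}_{X_t}(F)\to\mathcal{E}_q(F)^2$ (Theorem \ref{Walsh's equality}) and $e^{-2t}\mathrm{Ext}_{X_t}(F)\ge\mathcal{E}_q(F)^2$ for all $t$ (Lemma \ref{Walsh's inequality}), the factor in the denominator forces the easy interchange of $\lim_t$ and $\sup_F$, so $H(\mathcal{B}_q,\mathcal{B}_{q'})=\tfrac12\log\sup_{F\in\mathcal{MF}_1(V(q))}\frac{\mathcal{E}_{q'}(F)^2}{\mathcal{E}_q(F)^2}-D$, which by Lemma \ref{maximum ratio of E_q} equals $\tfrac12\log\max_j\frac{b_ji(G_j,H(q))}{a_ji(G_j,H(q'))}-D$. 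Feeding $\gamma=Y_s$, the same factoring gives $d_{\mathcal{T}}(X,Y_s)=s+D+o(1)$ and $\mathcal{B}_q(Y_s)=-s+\tfrac12\log\sup_F\frac{\mathcal{E}_q(F)^2}{\mathcal{E}_{q'}(F)^2}+o(1)$; the two copies of $s$ cancel and Lemma \ref{maximum ratio of E_q} yields $H(\mathcal{B}_{q'},\mathcal{B}_q)=D+\tfrac12\log\max_j\frac{a_ji(G_j,H(q'))}{b_ji(G_j,H(q))}$. Adding the two detour costs cancels $D$ and produces exactly the claimed sum. For the non-absolutely-continuous case, Lemma \ref{maximum ratio of E_q} makes $\sup_{F\in\mathcal{MF}_1(V(q))}\mathcal{E}_{q'}(F)^2/\mathcal{E}_q(F)^2=+\infty$, so $H(\mathcal{B}_q,\mathcal{B}_{q'})=+\infty$, and since detour costs are nonnegative, $\delta(\mathcal{B}_q,\mathcal{B}_{q'})=+\infty$.

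The routine interchanges of $\lim_t$ and $\sup_F$ inside the detour-cost computations are harmless: the relevant term $e^{-2t}\mathrm{Ext}_{X_t}(F)$ sits in a denominator and is uniformly bounded below by $\mathcal{E}_q(F)^2$ via Lemma \ref{Walsh's inequality}, which pins the $\limsup$, while pointwise convergence pins the $\liminf$. The genuine obstacle is the bookkeeping of the constant $D$. Because the two rays issue from different points, neither individual detour cost is basepoint-free, and $D$ must enter with opposite signs in the two costs; the crux is to verify that the very same $D$ appearing in the normalization of $\mathcal{B}_{q'}$ reappears as the limit of $d_{\mathcal{T}}(X,Y_s)-s$, so that it cancels in $\delta$. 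This cancellation is exactly what promotes Walsh's same-basepoint detour formula to the general statement of Amano \cite{Ama2014}, and is where I would concentrate the care.
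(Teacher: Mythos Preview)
The paper does not contain its own proof of this proposition: it is stated with attribution to Amano \cite{Ama2014}, as an extension of Walsh's same-basepoint formula (Corollary~3 in \cite{Wal2019}) to rays issuing from distinct points, and is then used as a black box in the proof of Proposition~\ref{minimum of the limiting distance}. So there is nothing in the paper to compare your argument against.

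That said, your outline is precisely the Walsh--Amano route and is correct. You invoke the explicit horofunction description from \cite{Wal2019,LS2014}, compute each detour cost along the corresponding geodesic ray, and reduce to $\sup_F \mathcal{E}_{q'}(F)^2/\mathcal{E}_q(F)^2$ via Lemma~\ref{maximum ratio of E_q}. The interchange of $\lim_t$ and $\sup_F$ in $\tfrac12\log\sup_F \mathcal{E}_{q'}(F)^2/\bigl(e^{-2t}\mathrm{Ext}_{X_t}(F)\bigr)$ is justified exactly as you say: Lemma~\ref{Walsh's inequality} bounds the denominator below by $\mathcal{E}_q(F)^2$ uniformly in $t$, pinning the $\limsup$, while Theorem~\ref{Walsh's equality} gives the pointwise limit, pinning the $\liminf$. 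The observation that the same interchange yields $d_{\mathcal{T}}(X,Y_s)-s\to D$, so that $D$ cancels in $\delta$, is indeed the content of Amano's extension.

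One small gap in the non-absolutely-continuous case: Lemma~\ref{maximum ratio of E_q} gives $\sup_F \mathcal{E}_{q'}(F)^2/\mathcal{E}_q(F)^2=+\infty$ only when $V(q')$ cannot be expressed in the indecomposable components $G_j$ of $V(q)$. It can happen that $V(q')=\sum_j b_j G_j$ with some $b_j=0$ (so not absolutely continuous) yet the supremum in that direction is finite. In that situation, however, $V(q)$ cannot be expressed in the components of $V(q')$, so Lemma~\ref{maximum ratio of E_q} with the roles of $q$ and $q'$ swapped gives $H(\mathcal{B}_{q'},\mathcal{B}_q)=+\infty$ instead. Either way $\delta=+\infty$; you should just add a sentence covering both directions.
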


\begin{proof}[\textbf{Proof of Proposition} \ref{minimum of the limiting distance}]
    If $V(q)$ and $V(q')$ are absolutely continuous, then by Theorem \ref{limiting distance} and Proposition \ref{detour distance}, we have
    \begin{align*}
        \lim_{t\to\infty}d_{\mathcal{T}}(X_t,Y_t)&=\frac{1}{2}\log\max_{1\leq j\leq n}\left\{\frac{a_ji(G_j,H(q'))}{b_ji(G_j,H(q))},\frac{b_ji(G_j,H(q))}{a_ji(G_j,H(q'))}\right\} \\
        &\geq\frac{1}{2}\log\left(\left(\max_{1\leq j\leq n}\frac{a_ji(G_j,H(q'))}{b_ji(G_j,H(q))}\right)^{\frac{1}{2}}\cdot\left(\max_{1\leq j\leq n}\frac{b_ji(G_j,H(q))}{a_ji(G_j,H(q'))}\right)^{\frac{1}{2}}\right) \\
        &=\frac{1}{2}\left(\frac{1}{2}\log\max_{1\leq j\leq n}\frac{a_ji(G_j,H(q'))}{b_ji(G_j,H(q))}+\frac{1}{2}\log\max_{1\leq j\leq n}\frac{b_ji(G_j,H(q))}{a_ji(G_j,H(q'))}\right) \\
        &=\frac{1}{2}\delta(\mathcal{B}_q,\mathcal{B}_{q'}).
    \end{align*}
    Since the detour distance $\delta(\mathcal{B}_q,\mathcal{B}_{q'})$ is invariant when the initial points of the \teich rays $\mathcal{R}_{q,X}(t)$ and $\mathcal{R}_{q',Y}(t)$ are shifted along their corresponding \teich geodesics, then for any $\sigma\in\mathbb{R}$, there is 
    $$
    \lim_{t\to\infty}d_{\mathcal{T}}(X_t,Y_{t+\sigma})\geq\frac{1}{2}\delta(\mathcal{B}_q,\mathcal{B}_{q'}),
    $$
    where the equality holds if 
    $$
    \sigma=\frac{1}{4}\log\frac{\max_{1\leq j\leq n}\frac{a_ji(G_j,H(q'))}{b_ji(G_j,H(q))}}{\max_{1\leq j\leq n}\frac{b_ji(G_j,H(q))}{a_ji(G_j,H(q'))}}.
    $$
    Thus, for this $\sigma$, we can obtain that 
    \begin{align*}
        \max_{1\leq j\leq n}\frac{e^{2\sigma}b_ji(G_j,H(q))}{a_ji(G_j,H(q'))}&=\frac{\left(\max_{1\leq j\leq n}\frac{a_ji(G_j,H(q'))}{b_ji(G_j,H(q))}\right)^{\frac{1}{2}}}{\left(\max_{1\leq j\leq n}\frac{b_ji(G_j,H(q))}{a_ji(G_j,H(q'))}\right)^{\frac{1}{2}}}\max_{1\leq j\leq n}\frac{b_ji(G_j,H(q))}{a_ji(G_j,H(q'))} \\
        &=\left(\max_{1\leq j\leq n}\frac{a_ji(G_j,H(q'))}{b_ji(G_j,H(q))}\right)^{\frac{1}{2}}\cdot\left(\max_{1\leq j\leq n}\frac{b_ji(G_j,H(q))}{a_ji(G_j,H(q'))}\right)^{\frac{1}{2}} \\
        &=\max_{1\leq j\leq n}\frac{a_ji(G_j,H(q'))}{e^{2\sigma}b_ji(G_j,H(q))}.
    \end{align*}
    Therefore, we conclude that
    \begin{align*}
        \lim_{t\to\infty}d_{\mathcal{T}}(X_t,Y_{t+\sigma})&=\frac{1}{2}\log\max_{1\leq j\leq n}\left\{\frac{a_ji(G_j,H(q'))}{e^{2\sigma}b_ji(G_j,H(q))},\frac{e^{2\sigma}b_ji(G_j,H(q))}{a_ji(G_j,H(q'))}\right\} \\
        &=\frac{1}{2}\log\left(\left(\max_{1\leq j\leq n}\frac{a_ji(G_j,H(q'))}{b_ji(G_j,H(q))}\right)^{\frac{1}{2}}\cdot\left(\max_{1\leq j\leq n}\frac{b_ji(G_j,H(q))}{a_ji(G_j,H(q'))}\right)^{\frac{1}{2}}\right) \\
        &=\frac{1}{2}\left(\frac{1}{2}\log\max_{1\leq j\leq n}\frac{a_ji(G_j,H(q'))}{b_ji(G_j,H(q))}+\frac{1}{2}\log\max_{1\leq j\leq n}\frac{b_ji(G_j,H(q))}{a_ji(G_j,H(q'))}\right) \\
        &=\frac{1}{2}\delta(\mathcal{B}_q,\mathcal{B}_{q'}).
    \end{align*}
\end{proof}

\begin{proof}[\textbf{Proof of Corollary} \ref{identical Busemann points}]
    By Proposition \ref{minimum of the limiting distance}, the Busemann points $\mathcal{B}_q$ and $\mathcal{B}_{q'}$ are identical if and only if their associated \teich rays are asymptotic. Combining this with Corollary \ref{asymptotic}, we can obtain that the Busemann points $\mathcal{B}_q$ and $\mathcal{B}_{q'}$ are identical if and only if $q$ and $q'$ are modularly equivalent.
\end{proof}


\section*{Acknowledgments}
We are very grateful to Hideki Miyachi and Guangming Hu for their helpful discussions and invaluable encouragements.




\bibliography{Reference}

\begin{thebibliography}{21}
\providecommand{\natexlab}[1]{#1}
\providecommand{\url}[1]{\texttt{#1}}
\expandafter\ifx\csname urlstyle\endcsname\relax
  \providecommand{\doi}[1]{doi: #1}\else
  \providecommand{\doi}{doi: \begingroup \urlstyle{rm}\Url}\fi

\bibitem[Ahlfors(2006)]{Ahl2006}
Lars~V. Ahlfors.
\newblock \emph{Lectures on quasiconformal mappings}, volume~38 of \emph{University Lecture Series}.
\newblock American Mathematical Society, Providence, RI, second edition, 2006.
\newblock With supplemental chapters by C. J. Earle, I. Kra, M. Shishikura and J. H. Hubbard.

\bibitem[Amano(2014{\natexlab{a}})]{Ama2014}
Masanori Amano.
\newblock On behavior of pairs of {T}eichm\"uller geodesic rays.
\newblock \emph{Conform. Geom. Dyn.}, 18:\penalty0 8--30, 2014{\natexlab{a}}.

\bibitem[Amano(2014{\natexlab{b}})]{Ama2014-TheAsym}
Masanori Amano.
\newblock The asymptotic behavior of {J}enkins-{S}trebel rays.
\newblock \emph{Conform. Geom. Dyn.}, 18:\penalty0 157--170, 2014{\natexlab{b}}.

\bibitem[Bonahon(1988)]{Bon1988}
Francis Bonahon.
\newblock The geometry of {T}eichm\"uller space via geodesic currents.
\newblock \emph{Invent. Math.}, 92\penalty0 (1):\penalty0 139--162, 1988.

\bibitem[Gardiner and Masur(1991)]{GM1991}
Frederick~P. Gardiner and Howard Masur.
\newblock Extremal length geometry of {T}eichm\"uller space.
\newblock \emph{Complex Variables Theory Appl.}, 16\penalty0 (2-3):\penalty0 209--237, 1991.

\bibitem[Gromov(1981)]{Gro1981}
M.~Gromov.
\newblock Hyperbolic manifolds, groups and actions.
\newblock In \emph{Riemann surfaces and related topics: {P}roceedings of the 1978 {S}tony {B}rook {C}onference ({S}tate {U}niv. {N}ew {Y}ork, {S}tony {B}rook, {N}.{Y}., 1978)}, volume No. 97 of \emph{Ann. of Math. Stud.}, pages 183--213. Princeton Univ. Press, Princeton, NJ, 1981.

\bibitem[Hu et~al.(2025)Hu, Lyu, Miyachi, and Qi]{HLMQ}
Guangming Hu, Zhiyang Lyu, Hideki Miyachi, and Yi~Qi.
\newblock The asymptoticity of pairs of {T}eichm\"uller rays, 2025.
\newblock Submitted.

\bibitem[Hubbard and Masur(1979)]{HM1979}
John Hubbard and Howard Masur.
\newblock Quadratic differentials and foliations.
\newblock \emph{Acta Math.}, 142\penalty0 (3-4):\penalty0 221--274, 1979.

\bibitem[Ivanov(2001)]{Iva2001}
Nikolai~V. Ivanov.
\newblock Isometries of {T}eichm\"uller spaces from the point of view of {M}ostow rigidity.
\newblock In \emph{Topology, ergodic theory, real algebraic geometry}, volume 202 of \emph{Amer. Math. Soc. Transl. Ser. 2}, pages 131--149. Amer. Math. Soc., Providence, RI, 2001.

\bibitem[Kerckhoff(1980)]{Ker1980}
Steven~P. Kerckhoff.
\newblock The asymptotic geometry of {T}eichm\"uller space.
\newblock \emph{Topology}, 19\penalty0 (1):\penalty0 23--41, 1980.

\bibitem[Lenzhen and Masur(2010)]{LM2010}
Anna Lenzhen and Howard Masur.
\newblock Criteria for the divergence of pairs of {T}eichm\"uller geodesics.
\newblock \emph{Geom. Dedicata}, 144:\penalty0 191--210, 2010.

\bibitem[Liu and Su(2014)]{LS2014}
Lixin Liu and Weixu Su.
\newblock The horofunction compactification of the {T}eichm\"uller metric.
\newblock In \emph{Handbook of {T}eichm\"uller theory. {V}ol. {IV}}, volume~19 of \emph{IRMA Lect. Math. Theor. Phys.}, pages 355--374. Eur. Math. Soc., Z\"urich, 2014.

\bibitem[Masur(1980)]{Mas1980}
Howard Masur.
\newblock Uniquely ergodic quadratic differentials.
\newblock \emph{Comment. Math. Helv.}, 55\penalty0 (2):\penalty0 255--266, 1980.

\bibitem[Minsky(1993)]{Min1993}
Yair~N. Minsky.
\newblock Teichm\"uller geodesics and ends of hyperbolic {$3$}-manifolds.
\newblock \emph{Topology}, 32\penalty0 (3):\penalty0 625--647, 1993.

\bibitem[Miyachi(2008)]{Miy2008}
Hideki Miyachi.
\newblock Teichm\"uller rays and the {G}ardiner-{M}asur boundary of {T}eichm\"uller space.
\newblock \emph{Geom. Dedicata}, 137:\penalty0 113--141, 2008.

\bibitem[Miyachi(2013)]{Miy2013}
Hideki Miyachi.
\newblock Teichm\"uller rays and the {G}ardiner-{M}asur boundary of {T}eichm\"uller space {II}.
\newblock \emph{Geom. Dedicata}, 162:\penalty0 283--304, 2013.

\bibitem[Miyachi(2014)]{Miy2014}
Hideki Miyachi.
\newblock Extremal length boundary of the {T}eichm\"uller space contains non-{B}usemann points.
\newblock \emph{Trans. Amer. Math. Soc.}, 366\penalty0 (10):\penalty0 5409--5430, 2014.

\bibitem[Rees(1981)]{Ree1981}
Mary Rees.
\newblock An alternative approach to the ergodic theory of measured foliations on surfaces.
\newblock \emph{Ergodic Theory Dynam. Systems}, 1\penalty0 (4):\penalty0 461--488, 1981.

\bibitem[Rieffel(2002)]{Rie2002}
Marc~A. Rieffel.
\newblock Group {$C^*$}-algebras as compact quantum metric spaces.
\newblock \emph{Doc. Math.}, 7:\penalty0 605--651, 2002.

\bibitem[Walsh(2014)]{Wal2014}
Cormac Walsh.
\newblock The horoboundary and isometry group of {T}hurston's {L}ipschitz metric.
\newblock In \emph{Handbook of {T}eichm\"uller theory. {V}ol. {IV}}, volume~19 of \emph{IRMA Lect. Math. Theor. Phys.}, pages 327--353. Eur. Math. Soc., Z\"urich, 2014.

\bibitem[Walsh(2019)]{Wal2019}
Cormac Walsh.
\newblock The asymptotic geometry of the {T}eichm\"uller metric.
\newblock \emph{Geom. Dedicata}, 200:\penalty0 115--152, 2019.

\end{thebibliography}
\bibliographystyle{plainnat-nourl}

\end{document}